\def\balign#1\ealign{\begin{align}#1\end{align}}
\def\baligns#1\ealigns{\begin{align*}#1\end{align*}}
\def\balignat#1\ealign{\begin{alignat}#1\end{alignat}}
\def\balignats#1\ealigns{\begin{alignat*}#1\end{alignat*}}
\def\bitemize#1\eitemize{\begin{itemize}#1\end{itemize}}
\def\benumerate#1\eenumerate{\begin{enumerate}#1\end{enumerate}}
\newenvironment{talign*}
 {\csname align*\endcsname}
 {\endalign}
\newenvironment{talign}
 {\csname align\endcsname}
 {\endalign}
\def\balignst#1\ealignst{\begin{talign*}#1\end{talign*}}
\def\balignt#1\ealignt{\begin{talign}#1\end{talign}}
\let\originalleft\left
\let\originalright\right
\renewcommand{\left}{\mathopen{}\mathclose\bgroup\originalleft}
\renewcommand{\right}{\aftergroup\egroup\originalright}
\def\tinycitep*#1{{\tiny\citep*{#1}}}
\def\tinycitealt*#1{{\tiny\citealt*{#1}}}
\def\tinycite*#1{{\tiny\cite*{#1}}}
\def\smallcitep*#1{{\scriptsize\citep*{#1}}}
\def\smallcitealt*#1{{\scriptsize\citealt*{#1}}}
\def\smallcite*#1{{\scriptsize\cite*{#1}}}
\def\<{\left\langle} 
\def\>{\right\rangle}
\newcommand{\inner}[1]{{\left\langle #1 \right\rangle}} 
\DeclareSymbolFont{rsfs}{U}{rsfs}{m}{n}
\DeclareSymbolFontAlphabet{\mathscrsfs}{rsfs}
\newtheorem{theorem}{Theorem}
\newtheorem{lemma}[theorem]{Lemma}
\renewenvironment{proof}{\noindent\textbf{Proof.}\hspace*{.3em}}{\qed \vspace{.1in}}
\newenvironment{proof-sketch}{\noindent\textbf{Proof Sketch}
  \hspace*{1em}}{\qed\bigskip\\}
\newenvironment{proof-idea}{\noindent\textbf{Proof Idea}
  \hspace*{1em}}{\qed\bigskip\\}
\newenvironment{proof-of-lemma}[1][{}]{\noindent\textbf{Proof of Lemma {#1}}
  \hspace*{1em}}{\qed\\}
  \newenvironment{proof-of-proposition}[1][{}]{\noindent\textbf{Proof of Proposition {#1}}
  \hspace*{1em}}{\qed\\}
\newenvironment{proof-of-theorem}[1][{}]{\noindent\textbf{Proof of Theorem {#1}}
  \hspace*{1em}}{\qed\\}
\newenvironment{proof-attempt}{\noindent\textbf{Proof Attempt}
  \hspace*{1em}}{\qed\bigskip\\}
\newtheorem{remark}{Remark}
\newtheorem{assumption}{Assumption}
\theoremstyle{definition}
\newtheorem{example}[theorem]{Example}
\definecolor{OliveGreen}{rgb}{0,0.6,0}
\renewcommand{\paragraph}{%
  \@startsection{paragraph}{4}%
  {\z@}{1.25ex \@plus 1ex \@minus .2ex}{-1em}%
  {\normalfont\normalsize\bfseries}%
}
\begin{document}

\title{Uniform-in-$N$ log-Sobolev inequality for the \\ mean-field Langevin dynamics with convex energy}
\author{\!\!\!\!\!
 Sinho Chewi\thanks{
  Department of Statistics and Data Science at
  Yale University, \texttt{sinho.chewi@yale.edu}
 }
 \ \ \ \ \
 Atsushi Nitanda\thanks{
 Center for Frontier AI Research at
 Agency for Science, Technology, and Research (A$\star$STAR), and
 College of Computing and Data Science at Nanyang Technological University, \texttt{atsushi\_nitanda@cfar.a-star.edu.sg}
 }
 \ \ \ \ \
Matthew S.\ Zhang\thanks{
  Department of Computer Science at
  University of Toronto, and Vector Institute, \texttt{matthew.zhang@mail.utoronto.ca}
}
}

\maketitle

\begin{abstract}
    We establish a log-Sobolev inequality for the stationary distribution of mean-field Langevin dynamics with a constant that is independent of the number of particles $N$.
    Our proof proceeds by establishing the existence of a Lipschitz transport map from the standard Gaussian measure via the reverse heat flow of Kim and Milman.
\end{abstract}

\section{Introduction}

In this paper, we establish a log-Sobolev inequality for the stationary measure of the interacting particle approximation of the mean-field Langevin dynamics, with a constant independent of the number of particles $N$.
The key assumption we adopt is that the mean-field energy functional is convex along linear interpolations of the measure, although our proof requires additional smoothness and structural conditions to conclude.
We first provide some background before fully describing the result.

Let $\mc P_2(\R^d)$ denote the space of probability measures over $\R^d$ with finite second moment, and let $\mc F : \mc P_2(\R^d)\to\R$ be a functional over this space.
The associated mean-field Langevin dynamics is the McKean--Vlasov equation
\begin{align}\label{eq:mean-field-langevin}
    \D X_t = -\nabla_{\mc W_2} \mc F(\xi_t, X_t) \, \D t + \sigma \, \D B_t\,, \qquad \xi_t \deq \law(X_t)\,,
\end{align}
where ${(B_t)}_{t\ge 0}$ is a standard Brownian motion on $\R^d$ and $\nabla_{\mc W_2} \mc F(\mu,\cdot) = \nabla \delta \mc F(\mu)$, the gradient of the first variation, can be interpreted as the Wasserstein gradient of $\mc F$ (see~\cite{AGS}).
In turn, the curve of measures ${(\xi_t)}_{t\ge 0}$ admits an interpretation as the Wasserstein gradient flow of the entropically regularized mean-field energy functional $\mc E: \mc P_2(\R^d) \to \R$ given by
\begin{align}\label{eq:mean-field-energy}
    \mc E(\nu) \deq \mc F(\nu) + \frac{\sigma^2}{2} \int \log \nu \, \D \nu\,, \qquad\text{if}~\nu \ll \text{Leb}\,,
\end{align}
and $\mc E(\nu) = +\infty$ otherwise.
The first-order optimality condition for minimization of $\mc E$ suggests that any minimizer $\pi$ should satisfy the following (implicit) equation:
\begin{align}\label{eq:mean-field-measure}
    \pi(\D x) \propto \exp\Bigl(-\frac{2}{\sigma^2}\, \delta \mc F(\pi, x)\Bigr)\,\D x\,.
\end{align}

In this work, we are interested in the particle approximation of the mean-field system, described as follows.
Given a point $x^{1:N} \in \R^{N \times d}$, define the corresponding empirical measure to be
\begin{align*}
    \rho_{x^{1:N}} \deq \frac{1}{N} \sum_{i=1}^N \delta_{x^i}\,,
\end{align*}
where $\delta_{x^i}$ denotes the Dirac delta measure centered at $x^i$. Here $x^{1:N} = [x^1, \ldots, x^N]$, where $x^i \in \R^d$ for all $i \in [N]$.
Consider the following probability measure over $\R^{N\times d}$:
\begin{align}\label{eq:finite-particle-measure}
    \mu^{1:N}(\D x^{1:N}) \propto \exp \Bigl(-\frac{2N}{\sigma^2}\, \mc F(\rho_{x^{1:N}})\Bigr)\, \D x^{1:N}\,.
\end{align}
This is called the finite-particle approximation to~\eqref{eq:mean-field-measure}, and it describes the stationary measure of an interacting particle approximation to~\eqref{eq:mean-field-langevin}.
We are broadly interested in the approximation error incurred by the finite-particle approximation---which is generally known in the literature as \emph{propagation of chaos}---as well as the long-time convergence of the finite-particle system, as these two aspects govern the computational effort required to sample from the measure $\pi$ given by~\eqref{eq:mean-field-measure}~\cite{kook2024sampling}.

Without further assumptions, the problem is not well-posed, since a minimizer of the mean-field energy $\mc E$ may not be unique.
To recover uniqueness, one seeks conditions under which $\mc E$ is strictly convex in a suitable sense, and in the literature there are at least two distinct conditions which cover applications of rather different natures: one can assume that $\mc F$ is strictly convex along Wasserstein geodesics, which is also known as \emph{displacement convexity}, or that $\mc F$ is strictly convex along linear interpolations of the measure, which we abbreviate as \emph{linear convexity}. The latter condition has recently attracted attention from the theoretical machine learning community because it captures applications to two-layer neural networks in the mean-field regime \cite{nitanda2017stochastic, chizat2018global, mei2018mean, Chi22MFLangevin, rotskoff2022trainability}.

Recently, under the condition of linear convexity,~\cite{chen2022uniform, Nit24MeanField} obtained propagation of chaos bounds for the finite-particle approximation of the mean-field Langevin dynamics.
Our goal here is to study the second question listed above---namely, the long-time convergence of the finite-particle system---which amounts to a study of the log-Sobolev constant of $\mu^{1:N}$.
Recall that a measure $\nu \in \mc P(\R^d)$ satisfies a logarithmic Sobolev inequality with constant $C_{\LSI}(\nu)$ if for all smooth test functions $g: \R^d \to \R$, we have
\begin{align}\label{eq:LSI}\tag{LSI}
    \ent_\nu g^2 \leq 2C_{\LSI}(\nu) \E_\nu[\norm{\nabla g}^2]\,,
\end{align}
where $\ent_\nu f \deq \E_\nu f \log f - \E_\nu f \log \E_\nu f$ for a non-negative function $f$. \eqref{eq:LSI} has numerous ramifications, implying for instance sub-Gaussian concentration of Lipschitz functions around their mean under $\nu$ via the Herbst argument.
Moreover, it is well-known that~\eqref{eq:LSI} is equivalent to the exponential decay of the relative entropy along the Langevin dynamics toward $\nu$.
Consequently, obtaining tight bounds for the log-Sobolev constant of a measure is a ubiquitous problem in high-dimensional probability.

We further assume that $\mc F$ has the structure $\mc F(\nu) = \frac{\lambda}{2}\int\norm\cdot^2\,\D \nu + \mc F_0(\nu)$.
We remark that under suitable assumptions on $\mc F_0$, it is straightforward to establish a log-Sobolev inequality for $\mu^{1:N}$ through standard perturbation principles.
However, a na\"{\i}ve application of such arguments leads to a log-Sobolev constant which tends to infinity with the number of particles $N$.
The main result of this paper is to establish the (considerably trickier) result that~\eqref{eq:LSI} holds with a constant that is \emph{independent} of $N$.

\paragraph*{Prior work.}
To the best of our knowledge, the first result on a uniform-in-$N$ LSI under linear convexity (beyond the non-interacting case when $\mc F$ is linear in the measure) was established recently in~\cite{kook2024sampling}.
In that work, the authors showed the existence of a Lipschitz transport map from the standard Gaussian measure $\gamma$ to $\mu^{1:N}$, based on the reverse heat flow map of Kim and Milman~\cite{KimMil12ReverseHeat}. The proof combined recent heat flow estimates from~\cite{brigati2024heat} with a new propagation of chaos result for interacting particles in heterogeneous environments. However, due to a suboptimal estimation of the Lipschitz constant, the bound on the log-Sobolev constant therein scales doubly exponentially, i.e., $\exp(\exp(\Omega(\delta)))$, where $\delta$ is a measure of the size of the perturbation $\mc F_0$.
Needless to say, for even mild perturbations,
the doubly exponential bound implies vacuous bounds for any real-life applications.

We describe the source of the doubly exponential dependence, since it provides context for our result.
The Lipschitz constant of the reverse heat flow map depends exponentially on certain heat flow estimates.
In~\cite{kook2024sampling}, these heat flow estimates were obtained through the help of propagation of chaos bounds, which were based on the argument of~\cite{chen2022uniform}. However, the propagation of chaos bound of~\cite{chen2022uniform} incurs a dependence on the log-Sobolev constant of $\pi$ (and of certain ``proximal Gibbs measures'', see \cite{, nitanda2022convex}), which already scales exponentially in the perturbation strength.
Hence the double exponential.

\paragraph*{Our result and approach.}
We closely follow the approach of~\cite{kook2024sampling}, but we crucially improve the bound on the log-Sobolev constant to a single exponential: $\exp(\text{poly}(\delta))$.

The inspiration for our improvement is the recent propagation of chaos bound in~\cite{Nit24MeanField}, which improves upon~\cite{chen2022uniform} by removing the dependence on the log-Sobolev constant of the proximal Gibbs measures.
We generalize the result of~\cite{Nit24MeanField} to heterogeneous environments and combine it with the argument of~\cite{kook2024sampling} to obtain our improved bound on the LSI constant, stated precisely in \S\ref{sec:main_result}.
As we explain in \S\ref{sec:technical_overview}, this entails stitching together different heat flow estimates for short and long times.

\paragraph*{Concurrent work.}
While preparing this work, we became aware of the concurrent work of~\cite{Wang24UnifLSI}, which establishes a comparable result---namely, a uniform-in-$N$ LSI under linear convexity with a constant that scales (singly) exponentially in the size of the perturbation.
The remarkable proof of~\cite{Wang24UnifLSI} combines together a defective LSI from~\cite{chen2022uniform} with an ingenious uniform-in-$N$ Poincar\'e inequality based on the recent techniques of~\cite{Gui+22UnifLSI}.

The result of~\cite{Wang24UnifLSI} applies more generally than ours, although we note that our result is slightly stronger in that the existence of a Lipschitz transport from the Gaussian implies the validity of other functional inequalities. We give a detailed comparison in \S\ref{sec:main_result}.
In any case, we believe that our approach is still of interest due to the fundamentally different nature of the proof.

\paragraph{Notation.}
In this work, we use $\norm{\cdot}$ to refer to the $2$-norm on $\R^d$, and $\norm\cdot_{\op}$ for the operator norm on $\R^{d \times d}$. We let $\mc P_2(\R^d)$ denote the set of probability measures on $\R^d$ which have finite second moment. In general, we use superscripts to denote the indices of a particle; the notation $x^{1:N} \in \R^{N \times d}$ is to be interpreted as a vector $x^{1:N} = [x^1, \ldots, x^N]$, where each $x^i \in \R^d$ for $i \in [N]$.
We use $\mc O$ to denote asymptotic upper bounds up to universal constants, and $\Omega$ similarly to denote asymptotic lower bounds. If $f$ is $\mu$-integrable, then we use the notation $\inner{f, \mu}$ to denote $\E_\mu f$.

\section{Main result}\label{sec:main_result}

We assume that the energy $\mc F: \mc P_2(\R^d) \to \R$ has the perturbative form of $\mc F: \nu \mapsto \mc F_0(\nu) + \int V \, \D \nu$, where $V$ is $\lambda$-strongly convex.

In this work, we impose the following assumptions on $\mc F_0$, which are the same as the ones in~\cite{Wang24UnifLSI} except that we substitute boundedness of $\nabla^2 \delta \mc F_0$ with Assumption~\ref{ass:bddgrad}.

\begin{assumption}[Linear convexity of $\mc F_0$]\label{ass:cvxty}
    The functional $\mc F_0$ is linearly convex, in the sense that for any two $\nu_0, \nu_1 \in \mc P_2(\R^d)$ and $t \in [0, 1]$,
    \begin{align*}
        \mc F_0((1-t)\, \nu_0 + t\, \nu_1) \leq (1-t)\, \mc F_0(\nu_0) + t\, \mc F_0(\nu_1)\,.
    \end{align*}
\end{assumption}

\begin{assumption}[Smoothness]\label{ass:smoothness}
    Let $\delta^2 \mc F_0 : \mc P_2(\R^d)\times \R^d\times \R^d\to\R$ denote the second variation of $\mc F_0$.
    Then, the following bound holds.
    \begin{align*}
        \norm{\nabla_1\nabla_2 \delta^2 \mc F_0(\nu, x,y)}_{\rm op}
        &\le \betB\,, \qquad\forall\,\nu \in \mc P_2(\R^d)\,,\;x\in\R^d\,,\;y\in\R^d\,.
    \end{align*}
\end{assumption}

\begin{assumption}[Bounded gradient]\label{ass:bddgrad}
    The Wasserstein gradient of $\mc F_0$ is uniformly bounded by $B < \infty$, i.e.,
    \begin{align*}
        \norm{\nabla_{\mc W_2} \mc F_0(\nu, x)} \leq B\,, \qquad\forall\,\nu\in\mc P_2(\R^d)\,,\;x\in\R^d\,.
    \end{align*}
\end{assumption}

\begin{remark}\label{rmk:smoothness}
    Assumption~\ref{ass:smoothness} is also adopted in the prior works~\cite{chen2022uniform, kook2024sampling, Wang24UnifLSI}.
    On the other hand,~\cite{kook2024sampling} further assumes that for any $\nu,\nu' \in \mc P_2(\R^d)$ and any $x,x'\in\R^d$,
    \begin{align*}
        \norm{\nabla_{\mc W_2} \mc F_0(\nu, x) - \nabla_{\mc W_2} \mc F_0(\nu', x')}
        &\le \beta\,(\norm{x-x'} + \mc W_1(\nu,\nu'))\,.
    \end{align*}
    By taking $\nu = \nu'$, this implies boundedness of $\nabla^2 \delta \mc F_0(\nu)$, uniformly over $\nu \in \mc P_2(\R^d)$, which is also assumed in~\cite{chen2022uniform, Wang24UnifLSI}.
    In contrast, we avoid the latter assumption, which, as we explain below, allows our results to be applied to certain mean-field models of two-layer neural networks with ReLU activations.
\end{remark}

\begin{example}[Mean-field two-layer networks]\label{ex:nn}
    The primary example that we have in mind is when $\mc F_0$ is of the form $\mc F_0(\nu) = \int \ell(\int h(\theta, z)\,\nu(\D\theta), z)\, P(\D z)$, where $\ell : \R\times \mc Z\to\R$, $h : \R^d\times \mc Z\to\R$, and $P$ is an auxiliary probability measure over $\mc Z$.
    For example, this encompasses applications to two-layer neural networks in the mean-field regime, as we describe next.

    Let ${\{(X_i, Y_i)\}}_{i=1}^n \subseteq \mc X\times \mc Y$, $\mc Y \subseteq \R$, be a dataset and consider the problem of finding a predictor $f : \mc X\to\mc Y$ to minimize $\frac{1}{n}\sum_{i=1}^n \mc L(f(X_i), Y_i)$, where $\mc L : \mc Y\times \mc Y \to \R_+$ is a loss function.
    For example, for appropriate choices of $\mc L$, this covers both classification and regression tasks.
    We aim to learn the predictor $f$ within a parameterized family $\{f_\theta : \theta \in \R^d\}$, e.g., a family of neural networks, by minimizing the loss criterion $\theta \mapsto \frac{1}{n}\sum_{i=1}^n \mc L(f_\theta(X_i), Y_i)$.
    In the mean-field formulation, we replace the optimization over $\theta\in\R^d$ with an optimization over probability measures $\nu \in \mc P_2(\R^d)$ with loss $\nu \mapsto \frac{1}{n}\sum_{i=1}^n \mc L(\int f_\theta(X_i)\,\nu(\D \theta), Y_i) \eqqcolon \mc F_0(\nu)$.
    This is indeed of the form described above, with $P = \frac{1}{n} \sum_{i=1}^n \delta_{(X_i,Y_i)}$, $\ell(\hat y, z) = \mc L(\hat y, y)$, and $h(\theta, z) = f_\theta(x)$ for $z = (x,y)$.
    Our main results apply to a regularized version of this optimization problem where we instead seek to minimize $\nu \mapsto \mc F_0(\nu) + \int V\,\D \nu + \frac{\sigma^2}{2}\int \log \nu\,\D \nu$, where $V$ is strongly convex and can be interpreted as a regularization term for the parameters $\theta$, and the entropic term $\frac{\sigma^2}{2} \int \log \nu \, \D \nu$ introduces stochasticity which allows for global optimization guarantees.
    
    We assume that $\ell(\cdot, z)$ is convex, $L_\ell$-Lipschitz, and $\beta_\ell$-smooth, and that $h(\cdot,z)$ is $L_h$-Lipschitz, uniformly over $z\in \mc Z$.
    Then,
    \begin{align*}
        \delta \mc F_0(\nu, x)
        &= \int \ell'\bigl(\E_\nu h(\cdot,z),z\bigr)\,h(x,z)\,P(\D z)\,, \\
        \delta^2 \mc F_0(\nu, x, y)
        &= \int \ell''\bigl(\E_\nu h(\cdot,z), z\bigr)\,h(x,z)\,h(y,z)\,P(\D z)\,,
    \end{align*}
    and so the assumptions above hold with
    \begin{align*}
        \betB = L_h^2 \beta_\ell\,, \qquad B = L_h L_\ell\,.
    \end{align*}
    On the other hand, the boundedness of $\nabla^2 \delta \mc F_0$, as was assumed in~\cite{chen2022uniform, kook2024sampling, Wang24UnifLSI} requires smoothness of $h$ as well.

    In the example described above, and when $\mc X = \R^d$ and we take $h(\theta,z) = \ReLU(\langle \theta,x\rangle) \deq {(\langle \theta,x\rangle)}_+$, we can check that $h$ satisfies the Lipschitz assumption with $L_h = \max_{i\in [n]}{\norm{X_i}}$.
\end{example}

We now state our main result on the existence of an $L$-Lipschitz transport map from the standard Gaussian measure to $\mu^{1:N}$.

\begin{theorem}[Main result, general setting]\label{thm:main-generic}
   For any $N \geq 1$, under Assumptions~\ref{ass:cvxty},~\ref{ass:smoothness}, and~\ref{ass:bddgrad}, there exists an $L$-Lipschitz transport map from the standard Gaussian measure to $\mu^{1:N}$ with $L$ bounded by
   \begin{align*}
       L \lesssim \frac{\sigma}{\sqrt\lambda} \exp \Bigl(\mc{O}\Bigl(\frac{\beta d}{\lambda} +\frac{B^2}{\lambda\sigma^2} + \frac{\beta B^2 d}{\lambda^2\sigma^2} + \frac{\beta B^4}{\lambda^3\sigma^{4}} \Bigr)\Bigr)\,.
   \end{align*}
\end{theorem}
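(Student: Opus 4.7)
The plan is to construct the $L$-Lipschitz transport via the reverse heat flow of Kim and Milman~\cite{KimMil12ReverseHeat}, closely following the framework of~\cite{kook2024sampling}. Let $(P_t)_{t\ge 0}$ denote the Ornstein--Uhlenbeck semigroup on $\R^{N\times d}$, whose stationary measure is the standard Gaussian $\gamma$, and write $\mu_t \deq P_t \mu^{1:N}$. The Kim--Milman construction yields a transport map $T : \gamma \to \mu^{1:N}$ whose Lipschitz constant is controlled by an integral of the positive part of $\lambda_{\max}(\nabla^2 \log(\D\mu_t/\D\gamma))$ over $t \ge 0$. The problem therefore reduces to bounding this Hessian uniformly in $N$ and integrably in $t$.

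To do so, I split the time integral at a threshold $t_0$ (to be tuned at the end) and treat each regime with a different tool. For short times $t \le t_0$, I use heat flow smoothing estimates in the spirit of~\cite{brigati2024heat}, applied directly to $\mu^{1:N} \propto \exp(-(2N/\sigma^2)\,\mc F(\rho_{x^{1:N}}))$. The Hessian of $-\log \mu^{1:N}$ decomposes into a $\lambda$-strongly convex block-diagonal piece (from $V$) plus a perturbation from $\mc F_0$, which is controlled in operator norm by Assumption~\ref{ass:smoothness}. Heat flow smoothing then yields short-time Hessian bounds whose dependence on the problem parameters is polynomial.

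For long times $t \ge t_0$, I compare $\mu_t$ to the tensor product $\pi_t^{\otimes N}$, where $\pi$ is the mean-field stationary measure of~\eqref{eq:mean-field-measure} and $\pi_t \deq P_t \pi$. Because $\pi_t^{\otimes N}$ is a product, its Hessian is block-diagonal, and its operator norm equals that of the single-particle Hessian $\nabla^2 \log \pi_t$, which is bounded uniformly in $N$ by elementary single-particle perturbation analysis. The comparison is made via a propagation-of-chaos bound: heat flow smoothing at time $t \ge t_0$ upgrades entropy-level closeness of $\mu_t$ and $\pi_t^{\otimes N}$ into a pointwise Hessian comparison.

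The main obstacle, and the key new ingredient beyond~\cite{kook2024sampling}, is establishing such a propagation-of-chaos bound in the \emph{heterogeneous} environment induced by conditioning on the heat flow noise (each particle receives independent noise and hence effectively sees its own external field). The bound of~\cite{Nit24MeanField} is well-suited here because it avoids dependence on the log-Sobolev constant of the proximal Gibbs measures, which was the source of the doubly-exponential blowup in~\cite{kook2024sampling}; I will adapt its entropy-method proof to accommodate the particle-dependent external fields arising from Kim--Milman conditioning, where Assumption~\ref{ass:bddgrad} plays a crucial role by replacing the uniform bound on $\nabla^2 \delta \mc F_0$ used in prior work. Finally, by choosing $t_0$ to balance the two regimes, the short- and long-time estimates combine to give the stated single-exponential bound on $L$.
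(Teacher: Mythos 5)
Your high-level plan---Kim--Milman reverse heat flow, heat-flow smoothing estimates, a short/long time split, and a heterogeneous propagation-of-chaos input adapted from~\cite{Nit24MeanField} via Assumption~\ref{ass:bddgrad}---matches the skeleton of the paper's proof, and you correctly diagnose why~\cite{Nit24MeanField} is the right replacement for~\cite{chen2022uniform}. However, there are two concrete problems with how you fill in the regimes.

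\textbf{Short-time regime.} You claim that the Hessian of $-\log\mu^{1:N}$ is a $\lambda$-strongly convex block-diagonal term plus a perturbation ``controlled in operator norm by Assumption~\ref{ass:smoothness}.'' This is not true: the $i$-th diagonal block of $\nabla^2 \bigl(\frac{2N}{\sigma^2}\mc F(\rho_{x^{1:N}})\bigr)$ contains the term $\frac{2}{\sigma^2}\nabla^2\delta\mc F_0(\rho_{x^{1:N}}, x^i)$, and boundedness of $\nabla^2\delta\mc F_0$ is precisely the hypothesis this paper deliberately drops (see Remark~\ref{rmk:smoothness}; this is what makes the ReLU example in Example~\ref{ex:nn} possible). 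Assumption~\ref{ass:smoothness} only controls $\nabla_1\nabla_2\delta^2\mc F_0$, i.e.\ the off-diagonal interaction blocks. Moreover, even granting a Hessian bound, applying~\cite{brigati2024heat}-type estimates ``directly to $\mu^{1:N}$'' cannot give an $N$-independent covariance bound: as an $\R^{Nd}$-valued perturbation, the Lipschitz constant of the non-quadratic part of $-\log\mu^{1:N}$ scales as $\sqrt{N}\cdot 2B/\sigma^2$, so every estimate that passes through Lemma~\ref{lem:lip-pert-lsi} or Lemma~\ref{lem:winf} at the joint level inherits an $N$-dependence. The paper's small-$t$ argument (Lemma~\ref{lem:small-regime}) avoids both issues by \emph{also} invoking propagation of chaos there: one first compares $\mu_{t,y^{1:N}}$ to the tilted product $\pi_{t,y^{1:N}}$ via Talagrand's $\msf T_2$ plus the KL bound from Theorem~\ref{thm:poc}, and only then uses single-particle Lipschitz-perturbation estimates, where the perturbation strength is $2B/\sigma^2$ (no $\sqrt{N}$). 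Propagation of chaos is needed in \emph{both} regimes; it is not a long-time-only device.

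\textbf{Long-time regime.} You propose comparing $\mu_t$ to $\pi_t^{\otimes N}$ with $\pi_t = P_t\pi$ and then ``upgrading entropy-level closeness to a pointwise Hessian comparison.'' The correct comparison object is not $\pi_t^{\otimes N}$ but the tilted product $\pi_{t,y^{1:N}} = \bigotimes_i \pi_{t,y^i}$ defined by a self-consistent system (each factor sees the averaged field $\bar\pi_{t,y^{1:N}}$); you gesture at this when you say each particle sees its own external field, but the reference measure you actually name is not the right one. More importantly, the upgrade step you invoke is the crux of the matter and is left entirely unexplained; the paper instead sidesteps any pointwise Hessian comparison in this regime (Lemma~\ref{lem:large-regime}) by bounding $\|\cov_{\mu_{t,y^{1:N}}}\|_{\op}$ directly via the Donsker--Varadhan variational principle together with sub-Gaussian concentration for the product factors, precisely because passing through a log-Sobolev constant (as Talagrand would) is too lossy for large $t$. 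Without specifying such a mechanism, the long-time step as written does not close.
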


Moreover, if we assume that $\mc F_0$ takes on the form described in Example~\ref{ex:nn}, we can further refine the bound in Theorem~\ref{thm:main-generic}, removing a factor of the dimension from our constant.

\begin{theorem}[Main result, Example~\ref{ex:nn}]\label{thm:main-specific}
    For any $N \ge 1$, under Assumptions~\ref{ass:cvxty},~\ref{ass:smoothness}, and~\ref{ass:bddgrad} and in the setting of Example~\ref{ex:nn},
    there exists an $L$-Lipschitz transport map from the standard Gaussian measure to $\mu^{1:N}$ with $L$ bounded by
   \begin{align*}
       L \lesssim \frac{\sigma}{\sqrt\lambda} \exp \Bigl(\mc{O}\Bigl(\frac{L_h^2 \beta_\ell}{\lambda} + \frac{L_h^2 L_\ell^2}{\lambda \sigma^2} + \frac{L_h^6 L_\ell^4\beta_\ell}{\lambda^3\sigma^4} \Bigr)\Bigr)\,.
   \end{align*}
\end{theorem}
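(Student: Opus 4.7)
The plan is to follow the proof of Theorem~\ref{thm:main-generic} essentially verbatim, making only targeted improvements to the estimates that produce the dimension-dependent contributions $\frac{\beta d}{\lambda}$ and $\frac{\beta B^2 d}{\lambda^2 \sigma^2}$ in the exponent. These two terms, I expect, both arise from bounding the \emph{trace} of the diagonal Hessian $\nabla^2 \delta \mc F_0(\nu, x)$ inside the heat-flow estimates that underlie the Kim--Milman construction; in the generic setting, Assumption~\ref{ass:smoothness} only yields an operator-norm bound on the cross variation $\nabla_1 \nabla_2 \delta^2 \mc F_0$, and the passage from operator norm to trace costs an extra factor of $d$.

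Under the integral representation of Example~\ref{ex:nn}, the first variation reads $\delta \mc F_0(\nu, x) = \int \ell'(\int h(\cdot, z)\, \D \nu, z)\, h(x, z)\, P(\D z)$, and differentiating twice in $x$ gives, for smooth $h$, the decomposition
\begin{align*}
    \nabla^2 \delta \mc F_0(\nu, x) = \int \ell'(\cdot)\, \nabla_x^2 h(x, z)\, P(\D z) + \int \ell''(\cdot)\, \nabla_x h(x, z)\, \nabla_x h(x, z)^\top\, P(\D z)\,.
\end{align*}
The second summand is a positive integral of rank-one matrices whose trace is uniformly bounded by $\beta_\ell L_h^2$, independent of $d$. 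Wherever the proof of Theorem~\ref{thm:main-generic} invokes $\Tr[\nabla^2 \delta \mc F_0] \lesssim \beta d$, I would substitute this sharper dimension-free bound, which directly removes the $d$ factor from both offending terms of the exponent and yields the bound claimed in Theorem~\ref{thm:main-specific}.

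The main obstacle, I expect, is that Example~\ref{ex:nn} is designed to accommodate nonsmooth activations such as ReLU, for which $\nabla_x^2 h$ does not exist in the classical sense, so the decomposition above is only formal. The plan to resolve this is to exploit the fact that $\nabla^2 \delta \mc F_0(\nu, x)$ enters the proof only through integrals against the heat-smoothed measures appearing along the reverse heat flow. An integration by parts in $x$ transfers the offending second derivative onto the smooth density (producing a score factor $\nabla \log$ of the heat-smoothed measure) and leaves only $\nabla_x h$ on the $\mc F_0$ side, which is controlled by $L_h$ via Assumption~\ref{ass:bddgrad}. Dimension-free control then follows from the uniform-in-$N$ score and Fisher-information bounds along the heat flow, obtained from the generalized propagation-of-chaos argument of~\cite{Nit24MeanField} that already drives Theorem~\ref{thm:main-generic}; stitching these together along the same short-time/long-time split should give the refined bound.
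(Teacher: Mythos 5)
You have misidentified the source of the dimension dependence. In the paper's proof of Theorem~\ref{thm:main-generic}, the diagonal Hessian $\nabla^2 \delta \mc F_0(\nu, x)$ and its trace are \emph{never bounded}: this is precisely the point of Remark~\ref{rmk:smoothness}, which explains that the argument avoids boundedness of $\nabla^2 \delta \mc F_0$ entirely (thereby accommodating ReLU without any integration-by-parts trickery). The factor of $d$ instead enters through the generalized propagation-of-chaos bound, Theorem~\ref{thm:poc}, whose proof reduces the Bregman divergence $\E_{\pi^{1:N}} B_{\mc F_0}(\rho_{x^{1:N}},\bar\pi)$ via $\norm{\nabla_1\nabla_2\delta^2\mc F_0}_{\op}\le\beta$ and Cauchy--Schwarz to the covariance trace $\E[\norm{X^i - \E X^i}^2]\le \bar C_{\msf{PI}} d$; the $d$ there is a trace-versus-operator-norm cost, but for the \emph{covariance of $\pi^i$}, not for any Hessian of $\delta\mc F_0$. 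This $\dprox = d$ then propagates into Lemmas~\ref{lem:small-regime} and~\ref{lem:large-regime} and hence into the exponent.

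The paper's mechanism for removing $d$ is Theorem~\ref{thm:poc-ii}: under the structure of Example~\ref{ex:nn}, one expands the Bregman divergence directly,
\begin{align*}
    \E_{\pi^{1:N}} B_{\mc F_0}(\rho_{x^{1:N}},\bar\pi)
    \le \int \frac{\beta_\ell}{2}\,\E_{\pi^{1:N}}\bigl[(\E_{\rho_{x^{1:N}}} h - \E_{\bar\pi} h)^2\bigr]\,P(\D z)
    = \int \frac{\beta_\ell}{2N^2}\sum_{i=1}^N \var_{\pi^i}(h)\,P(\D z)\,,
\end{align*}
and then bounds $\var_{\pi^i}(h)\le \bar C_{\msf{PI}} L_h^2$ using only that $h$ is a scalar $L_h$-Lipschitz function, so no trace ever appears. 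This yields $\dprox=1$ and gives the claimed bound after the same heat-flow machinery and rescaling as in Theorem~\ref{thm:main-generic}.

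A secondary issue: your proposed decomposition of $\nabla^2 \delta\mc F_0$ is not correct. Since $\delta \mc F_0(\nu,x) = \int \ell'(\E_\nu h(\cdot,z),z)\,h(x,z)\,P(\D z)$ and $\ell'$ has no $x$-dependence, two $x$-derivatives give only $\int \ell'(\cdot)\,\nabla_x^2 h(x,z)\,P(\D z)$; the rank-one $\ell''\,\nabla_x h\,\nabla_x h^\top$ term you wrote is actually part of the cross variation $\nabla_1\nabla_2\delta^2\mc F_0$ evaluated on the diagonal, which is the quantity controlled by Assumption~\ref{ass:smoothness} and already dimension-free in operator norm. So even as a formal decomposition your plan mixes objects that play different roles, and the part you keep ($\nabla_x^2 h$, undefined for ReLU) is precisely what the paper is structured to avoid needing.
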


The existence of an $L$-Lipschitz transport map from the standard Gaussian measure to $\mu^{1:N}$ immediately implies, via~\cite[Proposition 5.4.3]{BGL14}, that $\mu^{1:N}$ satisfies~\eqref{eq:LSI} with $C_{\msf{LSI}} \le L^2$.
However, the Lipschitz transport map gives more, since it can be used to transfer a wide range of functional inequalities.
For example, by transferring~\cite[Corollary 8.5.4]{BGL14} from the Gaussian measure, we obtain that for every smooth function $f : \R^{d\times N}\to [0,1]$,
\begin{align*}
    \mc I\Bigl(\int f\,\D \mu^{1:N}\Bigr)
    &\le \int \sqrt{{\mc I(f)}^2 + L^2\,\norm{\nabla f}^2}\,\D \mu^{1:N}\,,
\end{align*}
where $\mc I : [0,1]\to \R$ is the Gaussian isoperimetric profile $\mc I \deq \phi \circ \Phi^{-1}$, with $\phi$, $\Phi$ denoting the PDF and CDF of the Gaussian density on $\R$ respectively.
In turn, it yields a Gaussian isoperimetric inequality: for every closed set $A \subseteq \R^{d\times N}$,
\begin{align}\label{eq:gaussian_isoperimetry}
    {(\mu^{1:N})}^+(A)
    &\deq \lim_{\varepsilon \searrow 0} \frac{\mu^{1:N}(A^\varepsilon) - \mu^{1:N}(A)}{\varepsilon}
    \ge \frac{\mc I(\mu^{1:N}(A))}{L}\,,
\end{align}
where $A^\varepsilon$ denotes the $\varepsilon$-enlargement of $A$.
See~\cite{MikShe23HeatFlow} for further implications.

\paragraph*{Comparison with~\cite{Wang24UnifLSI}.}
Under more general assumptions,~\cite{Wang24UnifLSI} also establishes a log-Sobolev inequality with similar constant. More precisely,~\cite{Wang24UnifLSI} operates under Assumptions~\ref{ass:cvxty} and~\ref{ass:smoothness}, as well as $\norm{\nabla^2 \delta \mc F_0(\nu)} \leq \tilde\beta < \infty$ although we note that $\tilde\beta$ does not appear in the final bound on the LSI constant.
In comparison, we further adopt Assumption~\ref{ass:bddgrad}, as well as the assumption that $\mc F$ has the perturbative structure $\mc F(\nu) = \mc F_0(\nu) + \int V\,\D \mu$.
Furthermore, the result of~\cite{Wang24UnifLSI} is in terms of $\rho \deq 1/C_{\LSI}(\pi)$, whereas the following lower bound on $\rho$
follows from our assumptions as a consequence of Lemma~\ref{lem:lip-pert-lsi}:
\begin{align}\label{eq:lsi_const_pi}
    \frac{1}{\rho} = C_{\LSI}(\pi) \leq \frac{\sigma^2}{2\lambda} \exp \Bigl(\frac{2B^2}{\lambda \sigma^2} + \frac{4\sqrt{2}B}{\sqrt{\lambda}\sigma}\Bigr)\,.
\end{align}

The bound of~\cite{Wang24UnifLSI} on the log-Sobolev constant reads
\begin{align}\label{eq:songbo}
    C_{\LSI}(\mu^{1:N}) \leq \biggl\{\frac{1+2d\,(5+3\,(\varepsilon^{-1} - 1)\,\kappa)\,\frac{\kappa}{1-\kappa/N}} {1-\varepsilon - (8\kappa + 6\,(\varepsilon^{-1} - 1))\,\kappa^2/N} \biggl\}\,\frac{1}{\rho}\,,
\end{align}
for any $\varepsilon > 0$ and $N > \kappa$, where $\kappa \deq \beta/\rho$.
Our result in Theorem~\ref{thm:main-generic}, which however holds for all $N \ge 1$, leads to substantially larger estimates than~\eqref{eq:songbo} when we substitute in~\eqref{eq:lsi_const_pi}.
Under the additional structure of Example~\ref{ex:nn}, our Theorem~\ref{thm:main-specific} has the notable advantage of producing an estimate which is independent of the ambient dimension $d$.

Finally, we note that the stronger consequences of Lipschitz transport maps, such as the Gaussian isoperimetric inequality~\eqref{eq:gaussian_isoperimetry}, seem to be out of reach of the techniques of~\cite{Wang24UnifLSI}.

\section{Technical overview}\label{sec:technical_overview}

\subsection{Proof outline}

The Lipschitz transport map from the standard Gaussian measure $\gamma$ is constructed from the Kim--Milman reverse heat flow~\cite{KimMil12ReverseHeat}, and we rely on recent estimates along this flow from~\cite{brigati2024heat}.
Specifically, if $\mu$ is the measure of interest, then we can consider the curve of measures ${(\mu_t)}_{t\ge 0}$ given by
\begin{align}\label{eq:fokker_planck}
    \partial_t \mu_t = \divergence\Bigl(\mu_t \nabla \log \frac{\D\mu_t}{\D\gamma}\Bigr)\,, \qquad \mu_0 = \mu\,.
\end{align}
On one hand, this equation is the well-known Fokker--Planck equation which describes the evolution of the marginal law of the Ornstein--Uhlenbeck process, and therefore $\mu_t \to \gamma$ as $t\to\infty$.
On the other hand, the equation~\eqref{eq:fokker_planck} is in continuity equation form, and hence $\mu_t = {(S_t)}_\# \mu$ where $S_t$ is the flow map
\begin{align*}
    \partial_t S_t(x) = -\nabla \log \frac{\D\mu_t}{\D\gamma}(S_t(x))\,, \qquad S_0(x) = x\,.
\end{align*}
If $T_t \deq S_t^{-1}$ denotes the inverse flow map, then one could expect that $T_\infty$ is a map from $\gamma$ to $\mu$.
Moreover, the Lipschitz constant of $T_\infty$ can be bounded in terms of estimates for the driving vector fields.
The upshot of this construction is the following condition for the existence of a Lipschitz transport map.

\begin{lemma}[Tilt stability, informal]\label{lem:tilt_informal}
    Let $\mu$ be a probability measure over $\R^d$, and for each $t > 0$ and $y\in\R^d$ we define the probability measure
    \begin{align*}
        \mu_{t,y}(\D x)
        &\propto \exp\Bigl(-\frac{\norm{x-y}^2}{2t} + \frac{\norm x^2}{2}\Bigr)\,\mu(\D x)\,,
    \end{align*}
    where we assume that this does indeed yield a valid probability measure.
    Suppose that we can bound the operator norm of the covariance matrix of $\mu_{t,y}$ as follows: for some $p > 1$,
    \begin{align*}
        \norm{\cov_{\mu_{t,y}}}_{\op}
        &\le \begin{cases}
            t + O(t^p)\,, & \text{for small}~t\,, \\
            O(1)\,, & \text{for large}~t\,,
        \end{cases}
    \end{align*}
    uniformly over $y\in\R^d$.
    Then, there exists a Lipschitz transport map from $\gamma$ to $\mu$.
\end{lemma}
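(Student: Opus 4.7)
The plan is to follow the reverse Ornstein--Uhlenbeck flow construction of Kim--Milman~\cite{KimMil12ReverseHeat}, using the refined heat-flow estimates of~\cite{brigati2024heat}. Concretely, I take the curve ${(\mu_t)}_{t \ge 0}$ from~\eqref{eq:fokker_planck} together with the forward flow $S_t$ and its inverse $T_t \coloneqq S_t^{-1}$. Since the OU semigroup drives $\mu_t \to \gamma$ exponentially fast, the limit $T_\infty$ is a candidate transport from $\gamma$ to $\mu$, and its Lipschitz constant is exactly the quantity to bound.

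The first step is to differentiate the defining ODE $\partial_t T_t = \nabla \log(\D\mu_t/\D\gamma) \circ T_t$ in the spatial variable. Applying Gr\"onwall's inequality to the resulting matrix ODE for $\nabla T_t$ yields
\begin{align*}
\norm{\nabla T_\infty}_{\op} \le \exp\Bigl(\int_0^\infty \sup_{y\in\R^d} [\lambda_{\max}(\nabla^2 \log(\D\mu_t/\D\gamma)(y))]_+ \,\D t\Bigr)\,.
\end{align*}
So I am reduced to controlling the integral of the positive part of the largest eigenvalue of $\nabla^2 \log(\D\mu_t/\D\gamma)$.

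The crux of the argument is a Brascamp--Lieb-type identity expressing $\nabla^2 \log(\D\mu_t/\D\gamma)(y)$ in terms of $\cov_{\mu_{t,y}}$. Writing the Mehler kernel at time $t$ as an exponential tilt of $\mu$ (the factor $e^{\norm{x}^2/2}$ in the definition of $\mu_{t,y}$ absorbs the Gaussian reference weight), a direct computation gives, after a suitable reparameterization of OU time, a schematic identity
\begin{align*}
\nabla^2 \log (\D\mu_t/\D\gamma)(y) = \tfrac{1}{t^2}\,\cov_{\mu_{t,y}} - \tfrac{1}{t}\,I + \text{(lower-order terms)}\,.
\end{align*}
Given this identity, the two regimes in the hypothesis are precisely tailored to yield integrability: for small $t$, the bound $\norm{\cov_{\mu_{t,y}}}_{\op} \le t + O(t^p)$ cancels the singular $-\tfrac{1}{t}\,I$ to leading order, leaving an integrand of order $O(t^{p-2})$, which is integrable on $(0, 1]$ exactly because $p > 1$; for large $t$, $\norm{\cov_{\mu_{t,y}}}_{\op} = O(1)$ combined with the exponential OU contraction produces an exponentially decaying integrand, integrable on $[1, \infty)$.

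The main obstacle will be pinning down the precise Brascamp--Lieb identity under the specific parameterization of $\mu_{t,y}$ used in the lemma and verifying that the leading-order cancellation of the $-\tfrac{1}{t}\,I$ singularity by $\tfrac{1}{t^2}\cov_{\mu_{t,y}}$ is exact. Conceptually, the condition $p > 1$ quantifies ``approximate Gaussianity'' of $\mu_{t,y}$ to exactly the order needed for integrability at the origin; this is the heart of the heat-flow-based Lipschitz-transport machinery and is essentially the content of the short-time estimates of~\cite{brigati2024heat}, which I would cite rather than re-derive.
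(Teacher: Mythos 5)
Your proposal is correct and follows the same route as the paper, which proves this informal lemma via the quantitative Theorem~\ref{thm:heat_flow}: Kim--Milman reverse OU flow, a Gr\"onwall estimate for $\nabla T_\infty$ in terms of the time integral of $\sup_y \lambda_{\max}(\nabla^2 \log(\D\mu_t/\D\gamma)(y))$, and the covariance--Hessian identity from~\cite{brigati2024heat}. One minor refinement to your ``schematic'' identity: after the OU-to-heat time change $s = 1 - e^{-2t}$, the relation $\nabla^2 \log Q_t(\mu/\gamma) = \frac{1}{e^{2t}-1}\bigl(\frac{1}{s}\cov_{\mu_{s,\,e^{-t}\cdot}} - I\bigr)$ is exact with no lower-order terms, which is precisely why the $-\frac{1}{t}\,I$ singularity cancels at leading order as you describe and $p>1$ suffices for integrability at the origin.
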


We state a precise quantitative form of this principle as Theorem~\ref{thm:heat_flow}.

Applying this with $\mu$ replaced by $\mu^{1:N}$ and $y$ replaced by $y^{1:N} \in \R^{d\times N}$, our task boils down to bounding the operator norms of the covariance matrices of the tilted measures $\mu_{t,y^{1:N}}$.
Crucially, we want our bounds to be \emph{independent} of the number of particles $N$.

The intuition that we leverage is that if $\mu_{t,y^{1:N}}$ were a product measure, then the covariance bound would indeed be independent of $N$. On the other hand, the phenomenon of \emph{propagation of chaos} quantifies how close $\mu_{t,y^{1:N}}$ is to a product measure.
When implementing this strategy, even when we start with an exchangeable measure $\mu^{1:N}$, the tilting by $y^{1:N}$ causes $\mu_{t,y^{1:N}}$ to lose this property.
To address this, we generalize the propagation of chaos arguments from the literature to the non-exchangeable case, which requires a careful definition of the product measure to which we establish closeness.

This was the strategy carried out previously in~\cite{kook2024sampling}. As explained in the introduction, the heterogeneous propagation of chaos argument carried out therein was based on the argument of~\cite{chen2022uniform} and eventually incurs a doubly exponential dependence for the Lipschitz constant.
This is because the propagation of chaos bounds obtained in this manner depend on the log-Sobolev constant of the mean-field measure $\pi$ (as well as other proximal Gibbs measures), which already scales exponentially---and becomes doubly exponential after passing through the heat flow argument.

In this work, we instead leverage the recent propagation of chaos bound from~\cite{Nit24MeanField}, which controls $\KL(\mu^{1:N} \mmid \pi^{\otimes N})$ without any dependency on the log-Sobolev constant of $\pi$.
We first generalize this result to the non-exchangeable case, which we state as Theorems~\ref{thm:poc} and~\ref{thm:poc-ii}.

However, when we apply the bound to control $\norm{\cov_{\mu_{t,y}}}_{\op}$, via the Donsker--Varadhan principle, it turns out that the bound only scales as $O(t)$ for small $t$, whereas the condition of a $t+o(t)$ bound for small $t$ in Lemma~\ref{lem:tilt_informal} is crucial---otherwise, it leads to divergent integrals in the heat flow estimates.
We remedy this by using a different covariance estimate for small $t$ which enjoys the right scaling, and we stitch together the estimates from small and large times to obtain our final result.

\subsection{Auxiliary results}

We use the following results from prior works.

\begin{lemma}[{\cite[Corollary 2.4]{KhuMaaPed24LInfOT}}]\label{lem:winf}
    Suppose that $\nu$ is $\alpha$-strongly log-concave, and that $\hat\nu \propto \exp(-F)\,\nu$, where $F$ is $L$-Lipschitz.
    Then, $\mc W_\infty(\nu,\hat\nu) \le L/\alpha$.
\end{lemma}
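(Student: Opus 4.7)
My plan is to construct an explicit coupling $(X_\infty, Y_\infty)$ of $(\nu, \hat\nu)$ supported on the set $\{(x,y) : \norm{y-x} \le L/\alpha\}$, via synchronous Langevin diffusions. Writing $\nu \propto \exp(-V)$ with $V$ being $\alpha$-strongly convex, so that $\hat\nu \propto \exp(-(V+F))$, I would introduce the pair of SDEs
\begin{align*}
    \D X_t &= -\nabla V(X_t)\,\D t + \sqrt{2}\,\D B_t\,,\\
    \D Y_t &= -\nabla V(Y_t)\,\D t - \nabla F(Y_t)\,\D t + \sqrt{2}\,\D B_t\,,
\end{align*}
driven by a common Brownian motion ${(B_t)}_{t \ge 0}$ and initialized at $X_0 = Y_0$. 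Standard well-posedness theory yields unique strong solutions; the first diffusion is ergodic with invariant measure $\nu$, and the second is ergodic with invariant measure $\hat\nu$, which is a bounded-gradient perturbation of a strongly log-concave measure.

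The key step is a deterministic pathwise estimate on $Z_t \defeq Y_t - X_t$. The noise cancels under subtraction, so $Z_t$ is absolutely continuous with $\dot Z_t = \nabla V(X_t) - \nabla V(Y_t) - \nabla F(Y_t)$. Using $\alpha$-strong convexity of $V$ and the bound $\norm{\nabla F} \le L$ a.e.\ (valid since $F$ is $L$-Lipschitz, via Rademacher),
\begin{align*}
    \frac{d}{dt}\norm{Z_t}^2 = 2\,\inner{Z_t, \nabla V(X_t) - \nabla V(Y_t)} - 2\,\inner{Z_t, \nabla F(Y_t)} \le -2\alpha\,\norm{Z_t}^2 + 2L\,\norm{Z_t}\,,
\end{align*}
which upon dividing by $2\norm{Z_t}$ where positive yields $\frac{d}{dt}\norm{Z_t} \le -\alpha\,\norm{Z_t} + L$. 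Since $Z_0 = 0$, Gronwall's inequality gives $\norm{Z_t} \le (L/\alpha)\,(1 - e^{-\alpha t}) \le L/\alpha$ for every $t \ge 0$, pathwise (not merely in expectation).

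To conclude, I would pass to the limit $t \to \infty$. The joint laws $\operatorname{Law}(X_t, Y_t)$ form a tight family on $\R^d \times \R^d$ whose marginals converge in total variation to $\nu$ and $\hat\nu$ by ergodicity; extracting a weakly convergent subsequence produces a coupling $\Pi$ of $(\nu, \hat\nu)$. Since each joint law is supported on the closed set $\{(x,y) : \norm{y-x} \le L/\alpha\}$, the limit $\Pi$ is supported there as well, which produces a coupling witnessing $\mc W_\infty(\nu,\hat\nu) \le L/\alpha$.

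The main technical obstacle concerns the regularity of $F$: for merely Lipschitz $F$, $\nabla F$ is only defined almost everywhere and the SDE for $Y$ requires care. I would sidestep this by mollification, establishing the bound $\mc W_\infty(\nu,\hat\nu_\varepsilon) \le L/\alpha$ for smooth approximations $F_\varepsilon$ satisfying $\norm{\nabla F_\varepsilon}_\infty \le L$ uniformly, and then sending $\varepsilon \to 0$ using the total-variation convergence $\hat\nu_\varepsilon \to \hat\nu$ together with lower semicontinuity of $\mc W_\infty$ under this approximation.
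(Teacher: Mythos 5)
The paper cites this lemma from~\cite{KhuMaaPed24LInfOT} and does not reproduce the proof, so there is no in-paper argument to compare against; I evaluate your proposal on its own merits.

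Your synchronous-coupling argument is correct. The pathwise estimate is the heart of it: with $Z_t = Y_t - X_t$ and a common Brownian motion, the It\^o terms cancel and $Z$ solves a deterministic ODE, so $\alpha$-strong monotonicity of $\nabla V$ plus $\norm{\nabla F}\le L$ give
\[
\tfrac{\D}{\D t}\norm{Z_t}^2 \le -2\alpha\norm{Z_t}^2 + 2L\norm{Z_t}\,.
\]
One should be slightly more careful than ``divide by $2\norm{Z_t}$'' since $Z_0=0$: the clean way is to note that whenever $\norm{Z_t} > L/\alpha$ the right-hand side above is strictly negative, so the ball $\{\norm z \le L/\alpha\}$ is forward-invariant, and $Z_0 = 0$ is in it. The limiting argument is also fine: tightness of $\{\law(X_t,Y_t)\}_{t\ge 0}$ follows from tightness of the two marginal families; any subsequential weak limit $\Pi$ has marginals $\nu,\hat\nu$ by ergodicity (each Langevin process has a LSI, by strong log-concavity for $X$ and by Lemma~\ref{lem:lip-pert-lsi} or Holley--Stroock for $Y$, which gives TV convergence from any smooth start); and Portmanteau preserves the support condition on the closed set $\{\norm{y-x}\le L/\alpha\}$. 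The mollification step to handle non-smooth $F$ and the lower semicontinuity of $\mc W_\infty$ under weak convergence (again via the same tightness-of-couplings argument) are standard. The only genuinely glossed-over point is well-posedness of the SDE for $Y$ when $\nabla V$ is not globally Lipschitz; one needs to invoke monotonicity of $-\nabla V$ together with a Lyapunov bound (or simply assume $V\in C^2$, which is implicit throughout the paper), but this is routine.

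On the method itself: the reference~\cite{KhuMaaPed24LInfOT} obtains $\mc W_\infty$ bounds via a \emph{deterministic} Langevin flow map in the spirit of Kim--Milman, integrating a pointwise bound on the driving vector field to get $\norm{T(x)-x}\le L/\alpha$; your proof instead produces a (non-deterministic) coupling by running two diffusions with shared noise. The two are closely related but not identical: the flow-map route yields a transport map and is the approach the rest of this paper is built around, while your route is arguably more elementary (no need to control regularity of the flow map, only the monotone drift) at the cost of a weak-compactness step to extract the limiting coupling. Both deliver the claimed $\mc W_\infty$ bound.
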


\begin{lemma}[{LSI under Lipschitz perturbations~\cite[Theorem 1.4]{brigati2024heat}}]\label{lem:lip-pert-lsi}
    Let $\mu \propto \exp(-H - V)$, where $V, H: \R^d \to \R$ such that $V$ is $\alpha$-strongly convex and $H$ is $L$-Lipschitz. Then, $\mu$ satisfies a log-Sobolev inequality with constant $C_{\LSI}(\mu)$ given by
    \begin{align*}
        C_{\LSI}(\mu) \leq \frac{1}{\alpha} \exp \Bigl(\frac{L^2}{\alpha} + \frac{4L}{\sqrt{\alpha}} \Bigr)\,.
    \end{align*}
\end{lemma}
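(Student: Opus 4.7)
The plan is to apply the Kim--Milman reverse heat flow construction (Lemma~\ref{lem:tilt_informal}, and its quantitative form Theorem~\ref{thm:heat_flow}) directly to $\mu$, which will produce an $L'$-Lipschitz transport from the standard Gaussian to $\mu$ and thus give $C_{\LSI}(\mu) \leq (L')^2$ via \cite[Proposition~5.4.3]{BGL14}. By rescaling $x \mapsto x/\sqrt\alpha$, I first reduce to the case $\alpha = 1$ with effective Lipschitz constant $\tilde L \deq L/\sqrt\alpha$; the final LSI constant will then be multiplied by $1/\alpha$, matching the prefactor in the statement.

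The main technical step is to bound the operator norm of the covariance of the tilted measure
\begin{align*}
    \mu_{t,y}(\D x) \propto \exp\Bigl(-\frac{\|x-y\|^2}{2t} + \frac{\|x\|^2}{2} - V(x) - H(x)\Bigr)\,\D x
\end{align*}
uniformly in $y$. For short times I would compare $\mu_{t,y}$ with its ``$H$-free'' reference $\mu_{t,y}^0 \propto \exp(-\|x-y\|^2/(2t) + \|x\|^2/2 - V)$. The log-density of $\mu_{t,y}^0$ has Hessian $\succeq (1/t)I$ since $\nabla^2 V \succeq I$ in the rescaled problem, so Brascamp--Lieb yields $\|\cov(\mu_{t,y}^0)\|_{\op} \leq t$. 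Applying Lemma~\ref{lem:winf} gives $\mc W_\infty(\mu_{t,y}^0, \mu_{t,y}) \leq \tilde L t$, and a standard triangle-inequality comparison of $L^2$ norms of coordinates under the $\mc W_\infty$ coupling then produces $\|\cov(\mu_{t,y})\|_{\op} \leq t + O(\tilde L\, t^{3/2}) + O(\tilde L^2 t^2)$, which is the requisite $t + O(t^{3/2})$ scaling.

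For the long-time regime this comparison degrades (the $\mc W_\infty$ bound grows linearly in $t$), so I would bound $\|\cov(\mu_{t,y})\|_{\op}$ uniformly via a Poincar\'e-type estimate for $\mu$ itself: since $\mu$ is a Lipschitz perturbation of the $1$-log-concave $\nu$, a Bobkov--Ledoux-style argument (or Herbst concentration applied to $H$ under $\nu$) yields $\|\cov(\mu_{t,y})\|_{\op} = O(\exp(c\tilde L^2))$ for $t$ above a threshold. Plugging both regimes into the quantitative tilt-stability bound and integrating---the short-time correction $\tilde L t^{3/2}$ contributing a $\tilde L$ term in the exponent, the long-time contribution adding an additional $\tilde L^2$ term---should produce $L' \lesssim \exp\bigl(O(\tilde L^2 + \tilde L)\bigr)/\sqrt\alpha$. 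Undoing the rescaling yields precisely the stated form.

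The main obstacle will be obtaining the long-time covariance bound with the correct constant, since black-box perturbation results tend to yield suboptimal factors. Tightening them to match exactly $\exp(L^2/\alpha + 4L/\sqrt\alpha)$---in particular the coefficient $4$ on the linear term---will likely require a self-contained heat flow analysis in the spirit of Brigati--Pedrotti (rather than abstract perturbation principles), along with careful tracking of constants through the Kim--Milman integration to avoid unnecessary factors in the exponent.
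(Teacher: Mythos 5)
Your overall strategy is the correct one: this lemma is exactly \cite[Theorem~1.4]{brigati2024heat}, whose proof is the Kim--Milman reverse heat flow combined with a tilt-stability covariance estimate, and the paper itself packages this machinery as Theorem~\ref{thm:heat_flow}. The rescaling to $\alpha = 1$ with effective Lipschitz constant $\tilde L = L/\sqrt\alpha$, the short-time comparison to the $H$-free reference $\mu_{t,y}^0$ via Brascamp--Lieb and the $\mc W_\infty$ bound of Lemma~\ref{lem:winf}, and the resulting estimate $\sqrt{\|\cov\mu_{t,y}\|_{\op}} \leq \sqrt t + \tilde L t$ are all correct.

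The one place you go astray is the claimed need for a separate long-time regime and a Bobkov--Ledoux-type uniform covariance bound. This is unnecessary: both inputs to the comparison hold for \emph{all} $t > 0$, not just small $t$. In the rescaled problem, the negative log-density of $\mu_{t,y}^0$ has Hessian $\succeq \tfrac1t I - I + \nabla^2 V \succeq \tfrac1t I$, so $\mu_{t,y}^0$ is $(1/t)$-strongly log-concave for every $t$; Brascamp--Lieb then gives $\|\cov\mu_{t,y}^0\|_{\op} \leq t$ for all $t$, and Lemma~\ref{lem:winf} gives $\mc W_\infty(\mu_{t,y}^0, \mu_{t,y}) \leq \tilde L t$ for all $t$. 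So
\begin{align*}
\|\cov\mu_{t,y}\|_{\op} \leq \bigl(\sqrt t + \tilde L t\bigr)^2 = \frac{1}{1/t} + \frac{2\tilde L}{(1/t)^{3/2}} + \frac{\tilde L^2}{(1/t)^{2}}\,,\qquad\text{for all } t > 0\,,
\end{align*}
which is exactly the hypothesis of Theorem~\ref{thm:heat_flow} with $a = 0$, $(C_1, k_1) = (2\tilde L, 3/2)$, and $(C_2, k_2) = (\tilde L^2, 2)$. The Kim--Milman integration then yields $L' \leq \exp(2\tilde L + \tilde L^2/2)$ directly, and undoing the scaling gives $C_{\LSI}(\mu) \leq (L')^2/\alpha = \tfrac1\alpha\exp(4L/\sqrt\alpha + L^2/\alpha)$ with no slack whatsoever. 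In particular, the factor $4$ you were worried about is not the output of a delicate long-time covariance estimate: it is just $2\times 2$, one factor of $2$ from the cross term in $(\sqrt t + \tilde L t)^2$ and one from squaring the Lipschitz constant of the transport map when passing to the LSI constant. The two-regime stitching is needed elsewhere in the paper (Lemmas~\ref{lem:small-regime} and~\ref{lem:large-regime}) precisely because the propagation-of-chaos input only gives a covariance bound of order $O(t)$ rather than $t + o(t)$ at short times; that complication does not arise here.
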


We also recall that if $\nu$ is $\alpha$-strongly log-concave, meaning that $\nu \propto \exp(-V)$ for some $\alpha$-strongly convex function $V : \R^d\to\R$, then it satisfies~\eqref{eq:LSI} with $C_{\msf{LSI}}(\nu) \le 1/\alpha$, and moreover, that it satisfies a Poincar\'e inequality
\begin{align*}
    \var_\nu f
    &\le \frac{1}{\alpha}\E_\nu[\norm{\nabla f}^2] \qquad\text{for all compactly supported, smooth}~f : \R^d\to\R
\end{align*}
and Talagrand's $\msf T_2$ inequality
\begin{align*}
    \KL(\nu' \mmid \nu)
    &\ge \frac{\alpha}{2}\,\mc W_2^2(\nu,\nu') \qquad\text{for all}~\nu' \ll \nu\,.
\end{align*}
Also,~\eqref{eq:LSI} implies the following sub-Gaussian concentration inequality: for all $1$-Lipschitz $f : \R^d\to\R$,
\begin{align*}
    \nu\{f \ge \E_\nu f + t\} \le \exp\Bigl(-\frac{t^2}{2C_{\msf{LSI}}(\nu)}\Bigr) \qquad\text{for all}~t > 0\,.
\end{align*}

\section{A generalized propagation of chaos result}

\subsection{Generalized propagation of chaos}

In this section, we prove a propagation of chaos result which is inspired by the argument of~\cite{Nit24MeanField}.

\begin{theorem}[Generalized propagation of chaos]\label{thm:poc}
    For each $i\in [N]$, let $V_i : \R^d\to\R$, and let $\mc F_0 : \mc P_2(\R^d)\to\R$.
    Define the probability measures
    \begin{align*}
        \mu^{1:N}(x^{1:N})
        &\propto \exp\Bigl(-\sum_{i=1}^N V_i(x^i) - \frac{2N}{\sigma^2} \, \mc F_0(\rho_{x^{1:N}})\Bigr)\,, \\
        \pi^i(x^i)
        &\propto \exp\Bigl(-\,V_i(x^i) - \frac{2}{\sigma^2} \,\delta \mc F_0(\bar \pi, x^i)\Bigr)\,,
    \end{align*}
    where $\bar \pi \deq \frac{1}{N} \sum_{i=1}^N \pi^i$, $\pi^{1:N} \deq \bigotimes_{i=1}^N \pi^i$, and we assume that these measures are well-defined.
    Adopt Assumptions~\ref{ass:cvxty},~\ref{ass:smoothness}, and~\ref{ass:bddgrad}, and assume that each $\pi^i$ satisfies a Poincar\'e inequaltiy with constant $\bar {C}_{\msf{PI}}$ and that $V_i$ is $\alpha$-strongly convex.
    Then,
    \begin{align*}
        \KL(\mu^{1:N} \mmid \pi^{1:N})
        &\le \frac{4\betB}{\sigma^2}\min\Bigl\{\bar{C}_{\msf{PI}} d,\, \frac{2d}{\alpha} + \frac{4B^2}{\alpha^2\sigma^4}\Bigr\}\,.
    \end{align*}
\end{theorem}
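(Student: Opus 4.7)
My proof plan follows the approach of~\cite{Nit24MeanField}, generalized from the exchangeable setting to the heterogeneous product reference $\pi^{1:N} = \bigotimes_{i=1}^N \pi^i$. First, I would write out the log density ratio $\log(\mu^{1:N}/\pi^{1:N})$ in closed form using the definitions of $\mu^{1:N}$ and $\pi^{1:N}$. Introducing the Bregman-type remainder
\begin{equation*}
    R(\rho) \;\coloneqq\; \mc F_0(\rho) - \mc F_0(\bar\pi) - \int \delta\mc F_0(\bar\pi,\cdot)\,(\rho-\bar\pi)\,,
\end{equation*}
which is non-negative by linear convexity (Assumption~\ref{ass:cvxty}), a direct calculation yields $-\log(\mu^{1:N}(x^{1:N})/\pi^{1:N}(x^{1:N})) = (2N/\sigma^2)\,R(\rho_{x^{1:N}})$ up to an additive constant. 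Applying the Gibbs variational inequality for $\mu^{1:N}$ with trial measure $\pi^{1:N}$ and simplifying using the product structure then gives $\log(Z_{\pi^{1:N}}/Z_{\mu^{1:N}}) \le (2N/\sigma^2)\,\E_{\pi^{1:N}}[R(\rho_{X^{1:N}})]$. Integrating the log-density ratio against $\mu^{1:N}$ and using $R\ge 0$ to drop a non-positive contribution produces the central inequality
\begin{equation*}
    \KL(\mu^{1:N} \mmid \pi^{1:N}) \;\le\; \frac{2N}{\sigma^2}\,\E_{\pi^{1:N}}[R(\rho_{X^{1:N}})]\,.
\end{equation*}

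The core task is then to show $\E_{\pi^{1:N}}[R(\rho_{X^{1:N}})] \lesssim N^{-1}\min\{\bar{C}_{\msf{PI}}\,d,\,2d/\alpha + 4B^2/(\alpha^2\sigma^4)\}$. I would use the exact Taylor identity $R(\rho) = \int_0^1 (1-t)\,Q_t(\rho-\bar\pi)\,\D t$, where $Q_t(\xi) \coloneqq \int\int \delta^2 \mc F_0(\nu_t,x,y)\,\xi(\D x)\,\xi(\D y)$ and $\nu_t = (1-t)\,\bar\pi + t\rho$. Assumption~\ref{ass:smoothness} together with the standard optimal-transport coupling argument gives a uniform pointwise bound $Q_t(\rho-\bar\pi)\le \betB\,\mc W_2^2(\bar\pi,\rho)$, but a na\"ive use of this would cost a factor of $N$ in the final estimate. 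Instead, I would expand $\rho - \bar\pi = N^{-1}\sum_i(\delta_{X^i}-\pi^i)$ and exploit the independence of the particles under $\pi^{1:N}$: when the integrand $\delta^2 \mc F_0$ is evaluated at a measure independent of $X^{1:N}$, say at $\bar\pi$ itself, the cross terms $i\ne j$ vanish in expectation. After the standard two-argument centering trick about the marginal means $m^i$, combined with the bound on $\nabla_1\nabla_2\delta^2\mc F_0$ from Assumption~\ref{ass:smoothness}, each diagonal contribution is controlled by $\betB\,\tr\Sigma^i$, so that $\E_{\pi^{1:N}}[Q_0(\rho-\bar\pi)] \lesssim \betB\sum_i\tr\Sigma^i/N^2$, yielding exactly the needed $O(1/N)$ scaling.

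The two alternative bounds in the statement then come from two distinct controls on $\tr\Sigma^i$. Under a Poincar\'e inequality for $\pi^i$ with constant $\bar{C}_{\msf{PI}}$, applying PI coordinate-wise yields $\tr\Sigma^i \le d\,\bar{C}_{\msf{PI}}$. Alternatively, under $\alpha$-strong convexity of $V_i$ combined with Assumption~\ref{ass:bddgrad}, Lemma~\ref{lem:lip-pert-lsi} provides a log-Sobolev inequality for $\pi^i$, and the resulting sub-Gaussian concentration gives $\tr\Sigma^i \le 2d/\alpha + 4B^2/(\alpha^2\sigma^4)$ up to constants. The principal technical obstacle I foresee is replacing the $\nu_t$-dependent form $Q_t$ by its $\bar\pi$-evaluated counterpart $Q_0$ inside the Taylor integral: for $t>0$, the integrand $\delta^2\mc F_0(\nu_t,x,y)$ depends on \emph{all} particles through $\nu_t$, so the cross-term cancellation used above does not apply verbatim. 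Handling this correction---either by a convexity argument that exploits $t\in[0,1]$ and the sign of $Q_t$, or by a direct estimate of $Q_t - Q_0$ that is itself $O(1/N)$ in expectation---is exactly where the heterogeneous generalization of~\cite{Nit24MeanField} requires the most care.
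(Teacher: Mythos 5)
Your first two moves---the Bregman remainder $R(\rho) = \mc F_0(\rho) - \mc F_0(\bar\pi) - \langle\delta\mc F_0(\bar\pi),\rho-\bar\pi\rangle$, and the Gibbs/Jensen reduction to $\KL(\mu^{1:N}\mmid\pi^{1:N}) \le \frac{2N}{\sigma^2}\E_{\pi^{1:N}}[R(\rho_{X^{1:N}})]$---match the paper's proof exactly, and they are correct. The gap is exactly where you say it is, and it is fatal to the particular Taylor strategy you propose: expanding $R$ in a second-order integral remainder around $\bar\pi$ forces you to handle
\begin{align*}
\E_{\pi^{1:N}}[Q_t(\rho-\bar\pi)] = \frac{1}{N^2}\sum_{i,j}\E\!\iint\delta^2\mc F_0(\nu_t,x,y)\,(\delta_{X^i}-\pi^i)(\D x)\,(\delta_{X^j}-\pi^j)(\D y)\,,
\end{align*}
and the cross-terms $i\ne j$ only vanish when the measure argument $\nu_t$ is deterministic, i.e.\ at $t=0$. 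For $t>0$ there are $\Theta(N^2)$ random cross-terms of size $O(1)$ each; a direct estimate of $Q_t - Q_0$ would require control of the third variation $\delta^3\mc F_0$ (or Lipschitzness of $\delta^2\mc F_0$ in the measure argument), which is not among Assumptions~\ref{ass:cvxty}--\ref{ass:bddgrad}. Neither of your two proposed remedies points to a concrete mechanism for recovering $O(1/N)$.

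The paper's proof takes a genuinely different route at this juncture, and it is precisely the ingredient that makes the argument close. Rather than expanding $R$ around $\bar\pi$, the paper first uses linear convexity to pass to the \emph{first-order} bound $\E_{\pi^{1:N}}[R(\rho)] \le \E\langle\delta\mc F_0(\rho_{x^{1:N}}),\rho_{x^{1:N}}-\bar\pi\rangle$, then introduces an independent resample $\tilde x^i\sim\pi^i$ and the leave-one-out configuration $\tilde x_i^{1:N}$. The $t=0$ term (with measure argument $\rho_{\tilde x_i^{1:N}}$) vanishes by independence, and the correction is a second-variation expansion along the path $\rho_t = (1-t)\rho_{\tilde x_i^{1:N}} + t\,\rho_{x^{1:N}}$, for which the perturbation $\rho_{x^{1:N}}-\rho_{\tilde x_i^{1:N}} = \frac{1}{N}(\delta_{x^i}-\delta_{\tilde x^i})$ is \emph{deterministically} $O(1/N)$. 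There are thus no cross-terms at all---only $N$ diagonal contributions---and the uniform operator bound on $\nabla_1\nabla_2\delta^2\mc F_0$ (Assumption~\ref{ass:smoothness}) suffices because it holds uniformly over the measure argument, so the $\rho_t$-dependence never has to be compared against a fixed measure. This leave-one-out resampling is the step your plan is missing.

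Two smaller remarks. First, the quantity that shows up after Cauchy--Schwarz is $\iint\norm{x^i-z^i}^2\,\pi^i(\D x^i)\,\pi^i(\D z^i) = 2\,\tr\cov(\pi^i)$ rather than $\tr\Sigma^i$, which accounts for the constant $4$ in the statement. Second, your second branch of the $\min$ should \emph{not} go through Lemma~\ref{lem:lip-pert-lsi}: that lemma produces a factor $\exp(L^2/\alpha + 4L/\sqrt\alpha)$ that is not present in $\frac{2d}{\alpha} + \frac{4B^2}{\alpha^2\sigma^4}$. The paper instead applies Lemma~\ref{lem:winf} to get $\mc W_\infty(\pi^i,\breve\pi^i) \le 2B/(\alpha\sigma^2)$ with $\breve\pi^i\propto\exp(-V_i)$, and controls $\tr\cov(\breve\pi^i)$ by Brascamp--Lieb, avoiding any exponential factor.
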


Before giving the proof, we compare this result to the propagation of chaos bounds from~\cite{chen2022uniform, kook2024sampling}.
(We compare with~\cite{Nit24MeanField} in the next subsection.)

The result of~\cite[Theorem 5]{kook2024sampling} reads as follows: suppose that all of the potentials $V_i$ are the same, $V_i = V$ for $i\in [N]$. Then, for all $N \ge 160\bar\beta \bar {C}_{\msf{LSI}}/\sigma^2$, it holds that $\KL(\mu^{1:N} \mmid \pi^{\otimes N}) \le 33\bar\beta \bar {C}_{\msf{LSI}} d/\sigma^2$, where $\bar\beta$ is a stronger notion of smoothness (c.f.\ Remark~\ref{rmk:smoothness}).
We improve their result along several axes: (1) our result allows for heterogeneous environments (i.e., the $V_i$ are allowed to differ), which is crucial for the heat flow estimates in \S\ref{sec:heat_flow}; (2) we replace the dependence on the log-Sobolev constant with a dependence on the Poincar\'e constant; (3) our bound holds for all $N \ge 1$; (4) we provide a second estimate which does not depend on the Poincar\'e constant, which is important for settings in which $\bar {C}_{\msf{PI}}$ is exponentially large.

We now turn toward the proof of Theorem~\ref{thm:poc}.

\medskip{}

\begin{proof}
Let us define the Bregman divergence as follows:
\begin{equation*}
    B_{\mathcal{F}_0}(\rho_{x^{1:N}}, \bar{\pi})
    \deq \mathcal{F}_0(\rho_{x^{1:N}}) - \mathcal{F}_0(\bar{\pi}) 
    - \langle \delta \mathcal{F}_0(\bar{\pi}), \rho_{x^{1:N}} - \bar{\pi} \rangle\,.
\end{equation*}
Because of the convexity of $\mathcal{F}_0$, $B_{\mathcal{F}_0}(\rho_{x^{1:N}}, \bar{\pi}) \geq 0$. It follows that
\begin{align*}
    \mu^{1:N}(x^{1:N}) 
    &\propto \exp\Bigl( - \sum_{i=1}^N V_i(x^i) 
    - \frac{2N}{\sigma^2}\,\mathcal{F}_0(\rho_{x^{1:N}}) \Bigr) \\
    &\propto \exp\Bigl( - \sum_{i=1}^N V_i(x^i) 
    - \frac{2N}{\sigma^2}\,\bigl( B_{\mathcal{F}_0}(\rho_{x^{1:N}}, \bar{\pi}) + \langle \delta \mathcal{F}_0(\bar{\pi}), \rho_{x^{1:N}} \rangle\bigr) \Bigr) \\
    &\propto \exp\Bigl( - \sum_{i=1}^N V_i(x^i) 
    - \frac{2}{\sigma^2}\sum_{i=1}^N \delta \mathcal{F}_0(\bar{\pi},x^i)
    - \frac{2N}{\sigma^2}\, B_{\mathcal{F}_0}(\rho_{x^{1:N}}, \bar{\pi}) \Bigr) \\
    &\propto \exp\Bigl( - \frac{2N}{\sigma^2}\, B_{\mathcal{F}_0}(\rho_{x^{1:N}}, \bar{\pi}) \Bigr)\, \pi^{1:N}(x^{1:N})\,.
\end{align*}
Let $Z$ denote the normalization constant for the right-hand side: 
\begin{equation*}
    Z \deq \int \exp\Bigl( - \frac{2N}{\sigma^2}\, B_{\mathcal{F}_0}(\rho_{x^{1:N}}, \bar{\pi}) \Bigr)\, \pi^{1:N}( \mathrm{d}x^{1:N})\,.
\end{equation*}
By Jensen's inequality, we see that $\log Z \geq - \frac{2N}{\sigma^2} \int B_{\mathcal{F}_0}(\rho_{x^{1:N}}, \bar{\pi})\, \pi^{1:N}( \mathrm{d}x^{1:N})$.
Therefore, we get
\begin{align*}
    \KL(\mu^{1:N} \mmid \pi^{1:N})
    &= \int \mu^{1:N}(\mathrm{d}x^{1:N}) \log \frac{\exp\bigl( - \frac{2N}{\sigma^2}\, B_{\mathcal{F}_0}(\rho_{x^{1:N}}, \bar{\pi}) \bigr)}{Z} \\
    &= - \frac{2N}{\sigma^2} \int B_{\mathcal{F}_0}(\rho_{x^{1:N}}, \bar{\pi})\, \mu^{1:N}(\mathrm{d}x^{1:N}) 
    - \log Z \\
    &\leq \frac{2N}{\sigma^2} \int B_{\mathcal{F}_0}(\rho_{x^{1:N}}, \bar{\pi})\, \pi^{1:N}( \mathrm{d}x^{1:N})\,.
\end{align*}
By convexity of $\mc F_0$,
\begin{align*}
    \E_{x^{1:N}\sim \pi^{1:N}} B_{\mc F_0}(\rho_{x^{1:N}},\bar\pi)
    &= \E_{x^{1:N}\sim \pi^{1:N}}[\mc F_0(\rho_{x^{1:N}}) - \mc F_0(\bar \pi) - \langle \delta \mc F_0(\bar \pi), \rho_{x^{1:N}} - \bar\pi\rangle] \\
    &=\E_{x^{1:N}\sim \pi^{1:N}}[\mc F_0(\rho_{x^{1:N}}) - \mc F_0(\bar \pi)]
    \le \E_{x^{1:N}\sim\pi^{1:N}}\langle \delta \mc F_0(\rho_{x^{1:N}}), \rho_{x^{1:N}} - \bar\pi\rangle \\
    &= \frac{1}{N} \sum_{i=1}^N \E_{x^{1:N}\sim\pi^{1:N}}\langle \delta \mc F_0(\rho_{x^{1:N}}), \delta_{x^i} - \pi^i\rangle \\
    &= \frac{1}{N} \sum_{i=1}^N \E_{x^{1:N}\sim\pi^{1:N}} \int [\delta \mc F_0(\rho_{x^{1:N}}, x^i) - \delta \mc F_0(\rho_{x^{1:N}}, z^i)]\,\pi^i(\D z^i)\,.
\end{align*}
Next, let $\tilde x^i \sim \pi^i$ be drawn independently from $x^{1:N} \sim \pi^{1:N}$, and let $\tilde x_i^{1:N} \deq (x^1,\dotsc,x^{i-1},\tilde x^i, x^{i+1},\dotsc,x^N)$.
We can write
\begin{align*}
    &\E_{x^{1:N}\sim \pi^{1:N}} B_{\mc F_0}(\rho_{x^{1:N}},\bar\pi)
    \le \frac{1}{N} \sum_{i=1}^N \underbrace{\E_{\substack{x^{1:N}\sim\pi^{1:N} \\ \tilde x^i \sim \pi^i}} \int [\delta \mc F_0(\rho_{\tilde x_i^{1:N}}, x^i) - \delta \mc F_0(\rho_{\tilde x_i^{1:N}}, z^i)]\,\pi^i(\D z^i)}_{=0} \\
    &\qquad\qquad{} + \frac{1}{N} \sum_{i=1}^N \E_{\substack{x^{1:N}\sim \pi^{1:N} \\ \tilde x^i \sim \pi^i}} \int_0^1\iint [\delta^2 \mc F_0(\rho_t, x^i, y^i) - \delta^2 \mc F_0(\rho_t, z^i, y^i)]\,\pi^i(\D z^i)\,(\rho_{x^{1:N}} - \rho_{\tilde x_i^{1:N}})(\D y^i)\,\D t \\
    &\qquad = \frac{1}{N^2} \sum_{i=1}^N \E_{\substack{x^{1:N}\sim \pi^{1:N} \\ \tilde x^i \sim \pi^i}} \int_0^1\iint [\delta^2 \mc F_0(\rho_t, x^i, y^i) - \delta^2 \mc F_0(\rho_t, z^i, y^i)]\,\pi^i(\D z^i)\,(\delta_{x^i} - \delta_{\tilde x^i})(\D y^i)\,\D t\,,
\end{align*}
where $\rho_t \deq (1-t)\,\rho_{\tilde x_i^{1:N}} + t\,\rho_{x^{1:N}}$.
We can further write this as
\begin{align*}
    &\E_{x^{1:N}\sim \pi^{1:N}} B_{\mc F_0}(\rho_{x^{1:N}},\bar\pi) \\
    &\qquad \le \frac{1}{N^2} \sum_{i=1}^N \E_{\substack{x^{1:N}\sim \pi^{1:N} \\ \tilde x^i \sim \pi^i}} \int_0^1\int_0^1\iint \langle \nabla_1\delta^2 \mc F_0(\rho_t, z_u^i, y^i), x^i - z^i\rangle \,\pi^i(\D z^i)\,(\delta_{x^i} - \delta_{\tilde x^i})(\D y^i)\,\D t\,\D u \\
    &\qquad \le \frac{1}{N^2} \sum_{i=1}^N \E_{\substack{x^{1:N}\sim \pi^{1:N} \\ \tilde x^i \sim \pi^i}} \int_0^1\int_0^1\int_0^1\int \langle \nabla_1\nabla_2 \delta^2 \mc F_0(\rho_t, z_u^i, \tilde x_v^i)\, (x^i - z^i), x^i - \tilde x^i\rangle \,\pi^i(\D z^i)\,\D t\,\D u\,\D v \\
    &\qquad \le \frac{\betB}{N^2} \sum_{i=1}^N \E_{\substack{x^{1:N}\sim \pi^{1:N} \\ \tilde x^i \sim \pi^i}}\int \norm{x^i - z^i}\,\norm{x^i - \tilde x^i} \,\pi^i(\D z^i)\,,
\end{align*}
where $z^i_u \deq (1-u)\,z_i + u\,x^i$ and $\tilde x^i_v \deq (1-v)\,\tilde x^i + v\,x^i$.
By the Cauchy--Schwarz inequality,
\begin{align*}
    \E_{x^{1:N}\sim \pi^{1:N}} B_{\mc F_0}(\rho_{x^{1:N}},\bar\pi)
    &\le \frac{\betB}{N^2} \sum_{i=1}^N \iint \norm{x^i - z^i}^2\,\pi^i(\D x^i)\,\pi^i(\D z^i)\,.
\end{align*}

We now provide two bounds on this term which are effective when the Poincar\'e constant for $\pi^{1:N}$ is small or large respectively.
In the first case, for $X^i \sim \pi^i$,
\begin{align*}
    \iint \norm{x^i - z^i}^2 \,\pi^i(\D x^i)\,\pi^i(\D z^i)
    &= 2\E[\norm{X^i - \E X^i}^2]
    \le 2\bar {C}_{\msf{PI}} d\,,
\end{align*}
by the Poincar\'e inequality.
In the second case, let $\breve \pi^i \propto \exp(-V_i)$.
Since $\breve\pi^i$ is $\alpha$-strongly log-concave and $\delta \mc F_0(\bar \pi)$ is $B$-Lipschitz by Assumption~\ref{ass:bddgrad}, we may apply Lemma~\ref{lem:winf} to obtain
\begin{align*}
    2\E[\norm{X^i - \E X^i}^2]
    \le 2\E[\norm{X^i - \E\breve X^i}^2]
    \le 4\E[\norm{\breve X^i - \E\breve X^i}^2] + 4\mc W_\infty^2(\pi^i, \breve\pi^i)
    \le \frac{4d}{\alpha} + \frac{8B^2}{\alpha^2\sigma^4}\,,
\end{align*}
where $\breve X^i \sim \breve \pi^i$ and the bound on the covariance of $\breve\pi^i$ follows from the Brascamp--Lieb inequality~\cite{BraLie1976}.
\end{proof}

\subsection{Refined bound}

The general estimate in Theorem~\ref{thm:poc} incurs a dependence on the dimension $d$ of each particle.
In the next theorem, we prove a dimension-free estimate when $\mc F_0$ further has the structure given in Example~\ref{ex:nn}.

The result can be compared to~\cite[Theorem 1]{Nit24MeanField}: in that result, it is assumed that $V_i = V$ for all $i\in [N]$, and that $h$ is bounded by a constant $R_h$, and the resulting bound is $\KL(\mu^{1:N} \mmid \pi^{\otimes N}) \le 4\beta_\ell R_h^2/\sigma^2$.
The result below considers the the more general setting of heterogeneous environments and replaces the assumption of boundedness of $h$ with the assumption of Lipschitzness of $h$, which is more realistic for the application to two-layer networks.
Moreover, the bound below improves when $\pi^{1:N}$ has better isoperimetric properties, which is needed for the heat flow estimates in \S\ref{sec:heat_flow}.

\begin{theorem}[Generalized propagation of chaos II]\label{thm:poc-ii}
    Consider the setting of Theorem~\ref{thm:poc} but further assume that $\mc F_0$ has the structure in Example~\ref{ex:nn}.
    Then,
    \begin{align*}
        \KL(\mu^{1:N} \mmid \pi^{1:N})
        &\le \frac{\betB}{\sigma^2} \min\Bigl\{\bar {C}_{\msf{PI}}, \, \frac{2}{\alpha} + \frac{8B^2}{\alpha^2\sigma^4}\Bigr\}\,.
    \end{align*}
\end{theorem}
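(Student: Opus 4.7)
The plan is to refine the bound from the proof of Theorem~\ref{thm:poc} by exploiting the composite structure $\mc F_0(\nu) = \int \ell(\int h(\theta,z)\,\nu(\D\theta), z)\, P(\D z)$ when expanding the Bregman divergence. Starting from the same identity, namely the inequality $\KL(\mu^{1:N} \mmid \pi^{1:N}) \le \frac{2N}{\sigma^2}\,\E_{x^{1:N}\sim\pi^{1:N}} B_{\mc F_0}(\rho_{x^{1:N}}, \bar\pi)$ that was established there, I would avoid invoking Assumption~\ref{ass:smoothness} through its general second-variation kernel; instead, since $\mc F_0$ acts on $\nu$ only through the scalar $\langle h(\cdot,z),\nu\rangle$, I would use $\beta_\ell$-smoothness of $\ell(\cdot,z)$ to obtain the one-dimensional bound
\begin{align*}
   B_{\mc F_0}(\rho_{x^{1:N}},\bar\pi) \le \frac{\beta_\ell}{2}\int {\bigl(\langle h(\cdot,z),\rho_{x^{1:N}}-\bar\pi\rangle\bigr)}^2\,P(\D z)\,.
\end{align*}

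The key step is then the observation that under $\pi^{1:N} = \bigotimes_{i=1}^N \pi^i$, the quantity $\langle h(\cdot,z),\rho_{x^{1:N}} - \bar\pi\rangle = \frac{1}{N}\sum_{i=1}^N (h(x^i,z) - \E_{\pi^i} h(\cdot,z))$ is an average of independent centered random variables. Its expected square collapses to $\frac{1}{N^2}\sum_{i=1}^N \var_{\pi^i}[h(\cdot,z)]$. Substituting back gives
\begin{align*}
   \KL(\mu^{1:N}\mmid\pi^{1:N}) \le \frac{\beta_\ell}{N\sigma^2}\sum_{i=1}^N \int \var_{\pi^i}[h(\cdot,z)]\,P(\D z)\,,
\end{align*}
so the problem reduces to bounding $\var_{\pi^i}[h(\cdot,z)]$ uniformly in $i$ and $z$. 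Here the $L_h$-Lipschitzness of $h(\cdot,z)$ is crucial: it converts variance of the function value into a bound on the spatial variance of $\pi^i$, and this is what allows the final estimate to avoid the factor of $d$ present in Theorem~\ref{thm:poc}.

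To obtain the two terms of the minimum, I would treat the variance in two ways. For the first branch, the Poincar\'e assumption gives $\var_{\pi^i}[h(\cdot,z)] \le L_h^2\, \bar C_{\msf{PI}}$ directly, which after summing and dividing by $N$ yields the $\bar\beta \bar C_{\msf{PI}}/\sigma^2$ bound (with $\bar\beta = L_h^2\beta_\ell$ as computed in Example~\ref{ex:nn}). For the second branch, I would introduce $\breve\pi^i \propto \exp(-V_i)$, for which Brascamp--Lieb gives $\var_{\breve\pi^i}[h(\cdot,z)] \le L_h^2/\alpha$; I would then couple $\pi^i$ to $\breve\pi^i$ via Lemma~\ref{lem:winf} (noting that $\pi^i$ is a $(2B/\sigma^2)$-Lipschitz tilt of the $\alpha$-strongly log-concave $\breve\pi^i$), giving $\mc W_\infty(\pi^i,\breve\pi^i) \le 2B/(\alpha\sigma^2)$, and split $\var_{\pi^i}[h(\cdot,z)] \le 2\,\var_{\breve\pi^i}[h(\cdot,z)] + 2 L_h^2\,\mc W_\infty^2(\pi^i,\breve\pi^i)$ to produce the $L_h^2\bigl(\tfrac{2}{\alpha} + \tfrac{8B^2}{\alpha^2\sigma^4}\bigr)$ bound.

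The calculation is largely routine once the Bregman divergence is massaged into its one-dimensional form, so the main conceptual step is simply recognizing that the composite structure of $\mc F_0$ permits replacing the $\nabla_1\nabla_2\delta^2\mc F_0$ second-variation estimate used in Theorem~\ref{thm:poc} with a scalar smoothness-of-$\ell$ estimate, which is precisely what saves the factor of $d$. No novel technical obstacle is expected beyond bookkeeping; in particular the two variance regimes are handled identically to the two branches already appearing in the proof of Theorem~\ref{thm:poc}, just applied to $h(\cdot,z)$ rather than to the coordinate functions.
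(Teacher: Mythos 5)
Your proposal is correct and matches the paper's own argument in essentially every respect: you reduce the Bregman divergence of $\mc F_0$ to the scalar Bregman divergence of $\ell$, invoke $\beta_\ell$-smoothness, exploit independence under $\pi^{1:N}$ to collapse the expected square to $\frac{1}{N^2}\sum_i\var_{\pi^i}[h(\cdot,z)]$, and handle the variance via Poincar\'e for one branch and Brascamp--Lieb plus the $\mc W_\infty$ coupling of Lemma~\ref{lem:winf} for the other. The only cosmetic difference is that you bound $B_{\mc F_0}(\rho_{x^{1:N}},\bar\pi)$ pointwise before taking the expectation, whereas the paper first passes to expectation and then applies smoothness; the two manipulations are equivalent.
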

\begin{proof}
    We follow the proof of Theorem~\ref{thm:poc} until the bound on $\E_{x^{1:N}\sim\pi^{1:N}} B_{\mc F_0}(\rho_{x^{1:N}},\bar\pi)$.
    Here,
    \begin{align*}
        \E_{x^{1:N}\sim\pi^{1:N}} B_{\mc F_0}(\rho_{x^{1:N}},\bar\pi)
        &= \int \E_{x^{1:N}\sim\pi^{1:N}}\bigl[\ell\bigl(\E_{\rho_{x^{1:N}}} h(\cdot,z),z\bigr)-\ell\bigl(\E_{\bar\pi} h(\cdot,z),z\bigr)\bigr] \, P(\D z)\,.
    \end{align*}
    We subsequently bound the integrand pointwise, and to alleviate notation, we therefore drop the dependence on $z$.
    Continuing,
    \begin{align*}
        (\text{integrand})
        &\le \mathbb{E}_{x^{1:N} \sim \pi^{1:N}} \Bigl[ \ell'(\mathbb{E}_{\bar\pi} h)\,
        ( \mathbb{E}_{\rho_{x^{1:N}}} h - \mathbb{E}_{\bar{\pi}} h) 
        + \frac{\beta_\ell}{2}\,{( \mathbb{E}_{\rho_{x^{1:N}}} h - \mathbb{E}_{\bar{\pi}} h)}^2 \Bigr] \\
        &= \frac{\beta_\ell}{2}\E_{x^{1:N}\sim\pi^{1:N}}\bigl[{( \mathbb{E}_{\rho_{x^{1:N}}} h - \mathbb{E}_{\bar{\pi}} h)}^2 \bigr]
        = \frac{\beta_\ell}{2}\E_{x^{1:N}\sim \pi^{1:N}}\Bigl[\Bigl( \frac{1}{N} \sum_{i=1}^N \bigl(h(x^i)- \E_{\pi^i} h\bigr)\Bigr)^2 \Bigr] \\
        &= \frac{\beta_\ell}{2N^2} \sum_{i=1}^N \var_{\pi^i}(h)\,.
    \end{align*}
    As before, we provide two estimates: one that depends on the Poincar\'e constant and one that does not.
    The first estimate simply applies the Poincar\'e inequality:
    \begin{align*}
        \var_{\pi^i}(h)
        &\le \bar {C}_{\msf{PI}} \E_{\pi^i}[\norm{\nabla h}^2]
        \le \bar {C}_{\msf{PI}} L_h^2\,.
    \end{align*}
    The second estimate uses Lemma~\ref{lem:winf}:
    \begin{align*}
        \var_{\pi^i}(h)
        &\le \E_{\pi^i}[{(h-\E_{\breve \pi^i} h)}^2]
        \le 2\E_{\breve\pi^i}[{(h-\E_{\breve \pi^i} h)}^2] + 2L_h^2\mc W_\infty^2(\pi^i, \breve\pi^i)
        \le \frac{2L_h^2}{\alpha} + \frac{8B^2 L_h^2}{\alpha^2\sigma^4}\,,
    \end{align*}
    where we used the Poincar\'e inequality for $\breve \pi^i$ (which follows from strong log-concavity).
    The final bound follows by recalling from Example~\ref{ex:nn} that $\betB = L_h^2 \beta_\ell$.
\end{proof}

\section{Reverse heat flow estimates}\label{sec:heat_flow}

We first state a precise version of the Lipschitz estimate in terms of the tilt stability condition.
Note that the following theorem recovers the bound in~\cite[Theorem 1.4]{brigati2024heat} for a particular choice of $(C_m, k_m)_{m \in [M]}$.
Let ${(P_t)}_{t\ge 0}$, ${(Q_t)}_{t\ge 0}$ denote the heat and Ornstein--Uhlenbeck semigroups respectively.

\begin{theorem}[Lipschitz transport maps via tilt stability]\label{thm:heat_flow}
    Assume that the following bound holds for some $a, \zeta > 0$ and a sequence $(C_m, k_m)_{m \in[M]}$ with $k_m > 1$ for all $m \in [M]$:
    \begin{align*}
        \norm{\cov_{\mu_{t,y}}}_{\op} \leq \frac{1}{a+ 1/t} + \sum_{m=1}^M \frac{C_m}{{(a+1/t)}^{k_m}}\qquad \text{for all}~y \in \R^d,\, t > 0\,.
    \end{align*}
    Then, there exists an $L$-Lipschitz transport map $T: \R^d \to \R^d$ such that $T_\# \gamma = \mu$, where $\gamma$ is the standard Gaussian measure and $L$ can be estimated by 
    \begin{align*}
        L \leq \frac{1}{\sqrt{a+1}} \exp \biggl(\sum_{m=1}^M \frac{C_m}{2\,(k_m -1)\, {(a+1)}^{k_m - 1}} \biggr)\,.
    \end{align*}
\end{theorem}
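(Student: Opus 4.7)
The plan is to follow the Kim--Milman reverse heat flow construction, with the assumed covariance bound providing the quantitative input, as in the argument of~\cite{brigati2024heat}. First, I would set up the Ornstein--Uhlenbeck semigroup $Q_\tau$ and consider the curve of marginals $\mu_\tau \deq Q_\tau \mu$ for $\tau \ge 0$, so that $\mu_0 = \mu$ and $\mu_\tau \to \gamma$ as $\tau \to \infty$. This curve satisfies the Fokker--Planck equation~\eqref{eq:fokker_planck} in continuity-equation form with velocity field $v_\tau \deq -\nabla \log(\mu_\tau/\gamma)$. Letting $S_\tau$ be the flow maps generated by $v_\tau$, which satisfy $(S_\tau)_\# \mu = \mu_\tau$, their inverses $T_\tau \deq S_\tau^{-1}$ push $\mu_\tau$ to $\mu$. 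A uniform Lipschitz bound on $T_\tau$ (obtained below) yields the desired transport map $T \deq \lim_{\tau \to \infty} T_\tau$, which pushes $\gamma$ to $\mu$.

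Next, I would control the Lipschitz constant of $T_\tau$. The Jacobian $\nabla S_\tau$ obeys the matrix ODE $\partial_\tau(\nabla S_\tau) = -\nabla^2 \log(\mu_\tau/\gamma)(S_\tau(\cdot))\,\nabla S_\tau$, and a standard Rayleigh quotient bound gives
\begin{equation*}
\|\nabla T_\tau\|_{\op} \;\le\; \exp\Bigl(\int_0^\tau \lambda_{\max}\bigl(\nabla^2 \log(\mu_s/\gamma)\bigr)(S_s(\cdot))\,\D s\Bigr)\,.
\end{equation*}
The crucial Hessian--covariance identity is obtained by differentiating $\mu_\tau(y) \propto \int \exp\bigl(-|y - e^{-\tau} x|^2/(2(1 - e^{-2\tau}))\bigr)\,\mu(\D x)$ (Mehler's formula). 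With the change of variable $t \deq 1 - e^{-2\tau} \in (0,1)$, one identifies (after the corresponding rescaling of $y$) the Mehler posterior with the tilted measure $\mu_{t,y}$ of Lemma~\ref{lem:tilt_informal}, which yields
\begin{equation*}
\nabla^2 \log(\mu_\tau/\gamma)(y) \;=\; \frac{e^{-2\tau}}{t^2}\,\cov_{\mu_{t,y}} - \frac{e^{-2\tau}}{t}\, I\,.
\end{equation*}

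Substituting the assumed covariance bound, the leading term $1/(a + 1/t)$ combines exactly with the $-e^{-2\tau}/t$ contribution, since $e^{-2\tau}/(t^2(a+1/t)) - e^{-2\tau}/t = -a\,e^{-2\tau}/(at+1)$, to give a contractive contribution, while the corrections contribute $e^{-2\tau} C_m/(t^2(a+1/t)^{k_m})$. Using $e^{-2\tau}\,\D\tau = \D t/2$, the contractive part integrates to
\begin{equation*}
-\int_0^\infty \frac{a\,e^{-2\tau}}{at+1}\,\D\tau \;=\; -\frac{1}{2}\int_0^1 \frac{a}{at+1}\,\D t \;=\; -\frac{1}{2}\,\log(a+1)\,,
\end{equation*}
giving the prefactor $1/\sqrt{a+1}$. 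For each correction term, the substitution $u = 1/t$ converts $\int_0^1 \D t/(t^2(a+1/t)^{k_m})$ into $\int_1^\infty \D u/(a+u)^{k_m} = 1/((k_m - 1)(a+1)^{k_m-1})$, producing the exponential factor in the theorem.

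The main technical obstacle will be the careful derivation of the Hessian--covariance identity and the exact identification of $\mu_{t,y}$ with the Mehler posterior up to the $y$-rescaling, together with justifying the passage to the limit $\tau \to \infty$ via the uniform Lipschitz bound (so that $T$ is well-defined and satisfies $T_\# \gamma = \mu$). These are standard steps in the Kim--Milman/Brigati--Pedrotti program; the remainder of the argument is a matrix Gr\"onwall estimate followed by the elementary integrals displayed above.
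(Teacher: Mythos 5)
Your proposal is correct and takes essentially the same approach as the paper: both run the Kim--Milman reverse heat flow, derive the Hessian--covariance identity $\nabla^2 \log(\mu_\tau/\gamma)(y) = \frac{e^{-2\tau}}{s^2}\cov_{\mu_{s,e^{-\tau}y}} - \frac{e^{-2\tau}}{s}I$ with $s=1-e^{-2\tau}$ (equivalently, the formula from~\cite{brigati2024heat}), plug in the assumed covariance bound, and integrate $\lambda_{\max}$ over $\tau\in(0,\infty)$. The only cosmetic difference is the choice of integration variable ($t=1-e^{-2\tau}$ in your version versus $\tau = e^{2t}-1$ in the paper), which yields the same $-\tfrac12\log(a+1)$ prefactor and $\sum_m C_m/(2(k_m-1)(a+1)^{k_m-1})$ correction; the flow-convergence step you defer to the literature is the paper's citation of~\cite[Lemma 3.1]{brigati2024heat}.
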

\begin{proof}
    Following the calculations of~\cite{brigati2024heat},
    \begin{align*}
        -\frac{1}{\exp(2t) - 1}
        &\preceq \nabla^2 \log Q_t \Big(\frac{\mu}{\gamma}\Big) = \exp(-2t) \Big[\nabla^2 \log P_{1-\exp(-2t)} \frac{\mu}{\gamma} \Big](\exp(-t) \cdot)\\
        &= \frac{1}{\exp(2t)-1}\,\Bigl( \frac{\cov_{\mu_{1-\exp(-2t),\,\exp(-t)\,\cdot}}}{1-\exp(-2t)} - I \Bigr) \\
        &\preceq \biggl[\frac{1-\alpha}{\alpha\,(\exp(2t)-1)+1} + \sum_{m=1}^M \frac{C_m \exp(2t) \,{(\exp(2t)-1)}^{k_m-2}}{{(\alpha\,(\exp(2t)-1)+1)}^{k_m}}\biggr]\,I\,,
    \end{align*}
    where we take $\alpha = a+1$. We proceed to integrate the upper bound from $t=0$ to $t=\infty$. Note that, apart from the very first term, the generic form of the heat flow integral is
    \begin{align*}
        &\int_0^\infty \frac{\exp(2t)\,{(\exp(2t)-1)}^{k_m-2}}{{(\alpha\,(\exp(2t)-1) + 1)}^{k_m}} \, \D t  = \int_0^\infty \frac{(\tau+1)\,\tau^{k_m-2}}{{(\alpha \tau+1)}^{k_m}}\, \frac{1}{2\,(\tau+1)} \, \D \tau = \frac{1}{2\,(k_m-1)\,\alpha^{k_m-1}}\,.
    \end{align*}
    Here, we make the change of variables $\tau = \exp(2t)-1$.
    
    We now deal with the remaining term.
    \begin{align*}
        \int_0^{\infty} &\frac{1-\alpha}{\alpha\,(\exp(2t)-1)+1} \, \D t =\int_0^{\infty} \frac{1-\alpha}{\alpha \tau + 1}\, \frac{1}{2\,(\tau+1)} \, \D \tau = -\frac{1}{2} \log \alpha\,.
    \end{align*}
    The result follows from~\cite[Lemma 3.1]{brigati2024heat}.
\end{proof}

For $y^1,\dotsc,y^N \in \R^d$, let
\begin{align*}
    \mu_{t, y^{1:N}}(\D x^{1:N})
    &\propto \exp\Bigl(-\frac{\norm{y^{1:N}-x^{1:N}}^2}{2t} + \frac{\norm{x^{1:N}}^2}{2}\Bigr)\,\mu^{1:N}(\D x^{1:N})\,,\\
    \pi_{t, y^i}(\D x^i)
    &\propto \exp\Bigl(-\frac{\norm{y^i-x^i}^2}{2t} + \frac{\norm{x^i}^2}{2} - \frac{2}{\sigma^2} \,V(x^i) - \frac{2}{\sigma^2}\,\delta \mc F_0(\bar \pi_{t,y^{1:N}},x^i)\Bigr)\,\D x^i\,,
\end{align*}
where $\mu^{1:N}$ is defined in~\eqref{eq:finite-particle-measure} and we denote $\pi_{t,y^{1:N}} \deq \bigotimes_{i=1}^N \pi_{t,y^i}$ and $\bar \pi_{t,y^{1:N}} \deq \frac{1}{N} \sum_{i=1}^N \pi_{t,y^i}$.
We adopt the assumptions in \S\ref{sec:main_result}, and in particular, these define valid probability measures as soon as $2\lambda/\sigma^2 > 1$.
(Note that the measures $\pi_{t,y^i}$, $i\in [N]$ solve a system of (implicit) equations, and the uniqueness of the solution is argued in~\cite[Lemma 24]{kook2024sampling}.)

We further introduce
\begin{align*}
    \breve\pi_{t,y^i}(\D x^i)
    &\propto \exp\Bigl(-\frac{\norm{y^i-x^i}^2}{2t} + \frac{\norm{x^i}^2}{2} - \frac{2}{\sigma^2}\,V(x^i)\Bigr)\,\D x^i\,, \qquad \breve \pi_{t,y^{1:N}} \deq \bigotimes_{i=1}^N \breve \pi_{t,y^i}\,,
\end{align*}
so that $\pi_{t,y^i}(\D x^i) \propto \exp(-\frac{2}{\sigma^2}\,\delta \mc F_0(\bar\pi_{t,y^{1:N}},x^i))\,\breve\pi_{t,y^i}(\D x^i)$.

We now state two lemmas which bound the covariance of $\mu_{t, y^{1:N}}$ in different regimes.
Throughout, we use the shorthand $\alpha_t \deq 2\lambda/\sigma^2 - 1 + 1/t$.

\begin{lemma}[Small $t$ regime]\label{lem:small-regime}
    Suppose $t \leq {(20B^2/\sigma^4 - 2\lambda/\sigma^2 + 1)}^{-1}$. Then, we can bound
    \begin{align*}
        \norm{\cov \mu_{t, y^{1:N}}}_{\op} \leq \Bigl[\frac{1}{\sqrt{\alpha_t}} + O\Bigl(\frac{1}{\alpha_t}\,\bigl(\frac{\sqrt{\betB \dprox}}{\sigma} + \frac{B}{\sigma^2} \bigr)\Bigr)\Bigr]^2\,.
    \end{align*}
    Here,
    \begin{align*}
        \dprox \deq \begin{cases}
            d\,, & \text{under Theorem~\ref{thm:poc}}\,,\\
            1\,, & \text{under Theorem~\ref{thm:poc-ii}}\,.
        \end{cases}
    \end{align*}
\end{lemma}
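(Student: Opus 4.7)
The plan is to approximate $\mu_{t,y^{1:N}}$ by the product $\pi_{t,y^{1:N}} = \bigotimes_i \pi_{t,y^i}$, for which the covariance is block-diagonal, and control the gap through propagation of chaos. The central tool is an $L^2$-triangle inequality: for a unit vector $v \in \R^{Nd}$ and the $1$-Lipschitz $f(x) = v \cdot x$, choosing the optimal $\mc W_2$-coupling $(X,X')$ of $(\mu_{t,y^{1:N}}, \pi_{t,y^{1:N}})$ and decomposing $f(X) - \E_\pi f = (f(X) - f(X')) + (f(X') - \E_\pi f)$ yields
\begin{equation*}
    \sqrt{\var_{\mu_{t,y^{1:N}}} f} \le \mc W_2(\mu_{t,y^{1:N}}, \pi_{t,y^{1:N}}) + \sqrt{\var_{\pi_{t,y^{1:N}}} f}\,.
\end{equation*}

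\textbf{Bounding the two terms.} For the product variance, each factor $\pi_{t,y^i}$ is a $(2B/\sigma^2)$-Lipschitz perturbation (by Assumption~\ref{ass:bddgrad}) of the $\alpha_t$-strongly log-concave base $\breve\pi_{t,y^i}$. Brascamp--Lieb gives $\norm{\cov\breve\pi_{t,y^i}}_{\op} \le 1/\alpha_t$, Lemma~\ref{lem:winf} gives $\mc W_\infty(\pi_{t,y^i}, \breve\pi_{t,y^i}) \le 2B/(\sigma^2 \alpha_t)$, and the same triangle trick inside each factor yields $\sqrt{\norm{\cov \pi_{t,y^i}}_{\op}} \le 1/\sqrt{\alpha_t} + 2B/(\sigma^2 \alpha_t)$, which tensorizes to the product. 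For the Wasserstein distance, I would use Talagrand's $\msf T_2$ inequality: Lemma~\ref{lem:lip-pert-lsi} gives $C_{\msf{LSI}}(\pi_{t,y^i}) \le \frac{1}{\alpha_t}\exp(\frac{4B^2}{\sigma^4\alpha_t} + \frac{8B}{\sigma^2\sqrt{\alpha_t}})$, which is $O(1/\alpha_t)$ precisely because the small-$t$ hypothesis $\alpha_t \gtrsim B^2/\sigma^4$ bounds the exponent by a universal constant; tensorizing then gives $\mc W_2(\mu,\pi)^2 \lesssim K/\alpha_t$ for $K \deq \KL(\mu_{t,y^{1:N}} \mmid \pi_{t,y^{1:N}})$. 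The generalized propagation of chaos (Theorem~\ref{thm:poc} with $\alpha \leftarrow \alpha_t$, or Theorem~\ref{thm:poc-ii} under the Example~\ref{ex:nn} structure) bounds $K \lesssim (\betB/\sigma^2)(\dprox/\alpha_t + B^2/(\sigma^4 \alpha_t^2))$, so $\mc W_2(\mu,\pi) \lesssim \sqrt{\betB\dprox}/(\sigma\alpha_t) + \sqrt{\betB}\,B/(\sigma^3 \alpha_t^{3/2})$; the small-$t$ hypothesis absorbs the second summand into the first.

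\textbf{Combination and main obstacle.} Substituting everything into the triangle inequality gives $\sqrt{\var_{\mu_{t,y^{1:N}}} f} \le 1/\sqrt{\alpha_t} + O((1/\alpha_t)(\sqrt{\betB\dprox}/\sigma + B/\sigma^2))$, and squaring (with supremum over unit $v$) yields the stated bound. The main difficulty is the feedback between the Wasserstein estimate and the LSI constant of $\pi_{t,y^i}$: Lemma~\ref{lem:lip-pert-lsi} contributes an exponential factor $\exp(4B^2/(\sigma^4\alpha_t))$, and only the small-$t$ hypothesis prevents this from blowing up. Without it, the $1/\sqrt{\alpha_t} + (\text{lower order})$ scaling of the covariance---which is essential for the tilt-stability framework of Theorem~\ref{thm:heat_flow} to produce finite Lipschitz estimates---would be lost.
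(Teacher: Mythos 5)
Your proposal is correct and follows essentially the same route as the paper's proof: bound the covariance of the product reference $\pi_{t,y^{1:N}}$ via $\breve\pi_{t,y^i}$, Brascamp--Lieb, and Lemma~\ref{lem:winf}; bound $\mc W_2(\mu_{t,y^{1:N}},\pi_{t,y^{1:N}})$ via Talagrand's $\msf T_2$, the LSI estimate of Lemma~\ref{lem:lip-pert-lsi} (whose exponential is controlled precisely by the small-$t$ hypothesis $\alpha_t \ge 20B^2/\sigma^4$), and the propagation-of-chaos estimates of Theorems~\ref{thm:poc} and~\ref{thm:poc-ii}; and combine via the $L^2$-triangle inequality on linear test functions. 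The only cosmetic difference is that you invoke the second, Poincar\'e-free branch of the $\min$ in the propagation-of-chaos bound and absorb the extra $\sqrt{\beta}B/(\sigma^3\alpha_t^{3/2})$ term using the small-$t$ hypothesis, whereas the paper invokes the Poincar\'e branch with $\bar C_{\msf{PI}}\lesssim 1/\alpha_t$; both yield the same conclusion in this regime.
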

\begin{proof}
   Note that uniformly over all $y^i \in \R^d$, $\pi_{t, y^i}$ satisfies the Poincar\'e and log-Sobolev inequalities with constants $\bar C_{\msf{PI}}$, $\bar C_{\LSI}$ given respectively by
    \begin{align*}
        \bar C_{\msf{PI}}
        \le \bar C_{\msf{LSI}}
        &\le \frac{1}{\alpha_t} \exp\Bigl(\frac{4B^2/\sigma^4}{\alpha_t} + \frac{8B/\sigma^2}{\sqrt{\alpha_t}}\Bigr)
        \le \frac{3}{\alpha_t} \exp\Bigl(\frac{20B^2/\sigma^4}{\alpha_t}\Bigr)\,.
    \end{align*}
    This bound is obtained by applying Lemma~\ref{lem:lip-pert-lsi} to a $2B/\sigma^2$-Lipschitz perturbation of an $\alpha_t$-strongly log-concave measure.
    
    Note that in the given regime of $t$, the exponential term can further be bounded by $e$. Thus, we have
    \begin{align*}
        \mc W_2(\mu_{t, y^{1:N}}, \pi_{t, y^{1:N}})
        &\le \sqrt{2\bar {C}_{\LSI} \KL(\mu_{t, y^{1:N}} \mmid \pi_{t, y^{1:N}})}
        \lesssim \frac{1}{\sqrt{\alpha_t}} \exp\Bigl(\frac{10B^2/\sigma^4}{\alpha_t} \Bigr)\,\frac{\sqrt{\bar C_{\msf{PI}} \beta \dprox}}{\sigma}
        \lesssim \frac{\sqrt{\beta\dprox}}{\alpha_t \sigma}\,,
    \end{align*}
    using the improved propagation of chaos bounds in Theorems~\ref{thm:poc} and~\ref{thm:poc-ii}. In the first inequality, we applied Talagrand's $\msf T_2$ inequality for $\pi_{t, y^{1:N}}$. Here $\dprox = d$ in the setting of Theorem~\ref{thm:poc}, and $\dprox = 1$ in the setting of Theorem~\ref{thm:poc-ii}.

    Transporting from $\pi_{t, y^i}$ to $\breve \pi_{t, y^i}$, we also obtain
    \begin{align*}
        \norm{\cov \pi_{t, y^i}}_{\op}
        \le \bigl(\sqrt{\norm{\cov\breve \pi_{t,y^i}}_{\op}} + \mc W_2(\pi_{t,y^i}, \breve \pi_{t,y^i})\bigr)^2
        \leq \Bigl(\frac{1}{\sqrt{\alpha_t}} + \frac{2B/\sigma^2}{\alpha_t} \Bigr)^2\,,
    \end{align*}
    by Lemma~\ref{lem:winf} and the Brascamp--Lieb inequality~\cite{BraLie1976}.
    Substitute this into the following bound for $\mu_{t,y^{1:N}}$,
        \begin{align*}
        \norm{\cov \mu_{t, y^{1:N}}}_{\op} \le \Bigl( \max_{i\in [N]} \sqrt{\norm{\cov_{\pi_{t,y^i}}}_{\op}} + \mc W_2(\mu_{t,y^{1:N}}, \pi_{t,y^{1:N}}) \Bigr)^2\,,
    \end{align*}
    where we used the fact that $\pi_{t,y^{1:N}}$ is a product measure.
    This concludes the proof.
\end{proof}

\begin{lemma}[Large $t$ regime]\label{lem:large-regime}
    Let $\dprox$ be as in Lemma~\ref{lem:small-regime}. Suppose $t > {(20B^2/\sigma^4 - 2\lambda/\sigma^2 + 1)}^{-1}$. Then, we have the bound    
    \begin{align*}
        \norm{\cov \mu_{t, y^{1:N}}}_{\op}
        &\le \Bigl[\frac{1}{\sqrt{\alpha_t}} + O\bigl(\frac{B}{\alpha_t \sigma^2}\bigr)\Bigr]^2 + O\Bigl(\frac{\beta B^2 \dprox}{\alpha_t^3 \sigma^6} + \frac{\beta B^4}{\alpha_t^4 \sigma^{10}}\Bigr)\,.
    \end{align*}
\end{lemma}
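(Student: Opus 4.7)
The plan is to mirror Lemma~\ref{lem:small-regime} by starting from the Wasserstein triangle inequality
\[
\sqrt{\norm{\cov \mu_{t,y^{1:N}}}_{\op}} \le \max_{i \in [N]} \sqrt{\norm{\cov \pi_{t,y^i}}_{\op}} + W_2(\mu_{t,y^{1:N}}, \pi_{t,y^{1:N}})\,,
\]
which uses the product structure of $\pi_{t,y^{1:N}}$. The first term is bounded identically to the small regime, and its derivation does not depend on which regime of $t$ we are in: Brascamp--Lieb on the $\alpha_t$-strongly log-concave $\breve\pi_{t,y^i}$ gives $\norm{\cov \breve\pi_{t,y^i}}_{\op}^{1/2} \le 1/\sqrt{\alpha_t}$, and Lemma~\ref{lem:winf} controls the $W_\infty$-perturbation $W_\infty(\breve\pi_{t,y^i}, \pi_{t,y^i}) \le 2B/(\alpha_t \sigma^2)$, yielding $\norm{\cov \pi_{t,y^i}}_{\op}^{1/2} \le 1/\sqrt{\alpha_t} + 2B/(\alpha_t \sigma^2)$, which accounts for the squared bracket of the target.

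The novel and most delicate step is the control of $W_2(\mu_{t,y^{1:N}}, \pi_{t,y^{1:N}})$ without paying the exponential factor of $C_{\LSI}(\pi_{t,y^i})$, which by Lemma~\ref{lem:lip-pert-lsi} is at most $(1/\alpha_t)\exp(O(B^2/(\alpha_t\sigma^4)))$. In the small regime this exponent is a universal constant by definition, so Talagrand's $\msf T_2$ applies directly; in the large regime the exponential is unbounded and would spoil the final estimate. The plan is therefore to apply $\msf T_2$ to the $\alpha_t$-strongly log-concave product $\breve\pi_{t,y^{1:N}}$ (clean constant $2/\alpha_t$) and combine with the $W_\infty$-perturbation bound between $\breve\pi_{t,y^i}$ and $\pi_{t,y^i}$, while using the ``non-PI'' option of the propagation of chaos bound from Theorem~\ref{thm:poc} (respectively Theorem~\ref{thm:poc-ii}),
\[
\KL(\mu_{t,y^{1:N}} \mmid \pi_{t,y^{1:N}}) \lesssim \frac{\beta\,\dprox}{\sigma^2}\,\Bigl(\frac{1}{\alpha_t} + \frac{B^2}{\alpha_t^2 \sigma^4}\Bigr)\,,
\]
which is the crucial input that avoids any LSI dependence of $\pi_{t,y^i}$. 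Together this should produce $W_2^2(\mu_{t,y^{1:N}}, \pi_{t,y^{1:N}}) \lesssim \beta B^2 \dprox/(\alpha_t^3 \sigma^6) + \beta B^4/(\alpha_t^4 \sigma^{10})$.

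Squaring the triangle inequality and absorbing the cross term via AM--GM (with the factor of $2$ hidden inside the $O(\cdot)$ notation) then gives the claim. The hardest step is the $W_2$ bound itself: one must thread between (a) the exponential blow-up of $C_{\LSI}(\pi_{t,y^i})$ in the large regime and (b) the $\sqrt{N}$ blow-up of a naive triangulation $W_2(\mu,\pi) \le W_2(\mu,\breve\pi) + W_2(\pi,\breve\pi)$ through $\breve\pi$, since $W_2^2$ is additive over product measures and $W_2^2(\pi_{t,y^{1:N}},\breve\pi_{t,y^{1:N}}) = \sum_i W_2^2(\pi_{t,y^i},\breve\pi_{t,y^i})$ scales with~$N$. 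Avoiding both blow-ups simultaneously requires carefully exploiting the Bregman-divergence representation $\mu_{t,y^{1:N}} \propto \exp(-(2N/\sigma^2)\,B_{\mc F_0}(\rho_{x^{1:N}}, \bar\pi_{t,y^{1:N}}))\,\pi_{t,y^{1:N}}$ in tandem with the product structure of $\pi_{t,y^{1:N}}$, so that the KL estimate from Theorem~\ref{thm:poc} is piped into the $\msf T_2$ bound on $\breve\pi$ without either accumulating the exponential LSI penalty or picking up a dimensional factor of $\sqrt{N}$.
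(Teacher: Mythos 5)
Your high-level plan mirrors the small-$t$ regime: bound $\|\cov \pi_{t,y^i}\|_{\op}$ via Brascamp--Lieb plus the $\mc W_\infty$-perturbation lemma, and then add $\mc W_2(\mu_{t,y^{1:N}}, \pi_{t,y^{1:N}})$ via the product-measure triangle inequality. The first half is fine and matches the paper's small-regime computation. But the second half is where the paper departs from this template precisely because it \emph{cannot} be carried out, and your proposal does not resolve the gap you yourself flag.

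Concretely: you want $\mc W_2(\mu_{t,y^{1:N}},\pi_{t,y^{1:N}})$ without the exponential LSI factor of $\pi_{t,y^i}$. Applying $\msf T_2$ for $\breve\pi_{t,y^{1:N}}$ requires $\KL(\mu_{t,y^{1:N}}\mmid\breve\pi_{t,y^{1:N}})$, not the propagation-of-chaos-controlled $\KL(\mu_{t,y^{1:N}}\mmid\pi_{t,y^{1:N}})$; the two differ by $\E_{\mu}\log(\D\pi/\D\breve\pi)$, which is a sum of $N$ terms each of size $\mc O(B/\sigma^2 \cdot \text{scale})$ and hence scales linearly in $N$. And, as you note, the alternative triangulation $\mc W_2(\mu,\pi) \le \mc W_2(\mu,\breve\pi)+\mc W_2(\breve\pi,\pi)$ picks up $\mc W_2(\breve\pi_{t,y^{1:N}},\pi_{t,y^{1:N}}) = (\sum_i \mc W_2^2(\breve\pi_{t,y^i},\pi_{t,y^i}))^{1/2} \gtrsim \sqrt{N}$. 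You assert the Bregman representation ``should'' thread between these two failures, but you never exhibit an argument that does so, and I do not believe one exists at the level of $\mc W_2$: the obstruction is that $\mc W_2$ on $\R^{Nd}$ is intrinsically an $N$-summed quantity, and there is no product structure in $\mu_{t,y^{1:N}}$ to localize it.

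The paper's actual proof avoids $\mc W_2$ entirely in the large-$t$ regime. It applies the Donsker--Varadhan variational principle directly to the quadratic form $\langle\theta^{1:N}, \cov_{\mu_{t,y^{1:N}}}\theta^{1:N}\rangle$, yielding $\frac{1}{\xi}(\KL(\mu\mmid\pi) + \log\E_\pi\exp(\xi\langle\theta,X-\E X\rangle^2))$. The log-MGF is then controlled by splitting $X-\E X$ into $(\breve X - \E\breve X)$ (sub-Gaussian with parameter $1/\alpha_t$ by strong log-concavity, \emph{summing additively in $\psi_2$-norm over the $N$ independent coordinates}) plus $(X - \breve X) - \E(X-\breve X)$ (bounded of size $\mc O(B/(\alpha_t\sigma^2))$ per coordinate via the $\mc W_\infty$ coupling, controlled by Hoeffding). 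Choosing $\xi$ at its maximal allowable value then gives the stated bound. The crucial gain over a $\mc W_2$-based argument is that $\psi_2$-norm additivity over independent coordinates of $\theta^{1:N}$ (with $\|\theta^{1:N}\|=1$) is dimension-free, whereas $\mc W_2^2$ over the product is not. You should abandon the $\mc W_2$ route for this regime and bound the quadratic form directly; the propagation-of-chaos input enters through the $\KL$ term in Donsker--Varadhan, which is the same place you intended to use it.
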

\begin{proof}
    In this regime, we want to use a tighter bound which does not pass through the log-Sobolev constant $\bar {C}_{\LSI}$. In particular, we do not want to rely on Talagrand's inequality.

    We let $\tilde X^{1:N} \sim \mu_{t,y^{1:N}}$ and $X^{1:N} \sim \pi_{t,y^{1:N}}$.
    Also, let $\breve X^{1:N} \sim \breve \pi_{t,y^{1:N}}$ be optimally coupled to $X^{1:N}$ in the $\mc W_\infty$ metric.
    Applying the Donsker--Varadhan variational principle, for any $\xi > 0$,
    \begin{align*}
        &\langle \theta^{1:N}, \cov_{\mu_{t,y^{1:N}}} \theta^{1:N}\rangle
        \le \E[\langle \theta^{1:N}, \tilde X^{1:N} - \E X^{1:N} \rangle^2] \\
        &\qquad \leq \frac{1}{\xi}\, \Bigl(\KL(\mu_{t,y^{1:N}} \mmid \pi_{t, y^{1:N}}) + \log \E \exp \bigl(\xi\,\langle \theta^{1:N}, X^{1:N} - \E X^{1:N} \rangle^2\bigr)\Bigr) \\
        &\qquad \leq \frac{1}{\xi}\, \Bigl(\KL(\mu_{t,y^{1:N}} \mmid \pi_{t, y^{1:N}}) \\
        &\qquad\qquad\qquad{} + \log \E\exp\bigl(2\xi\,\langle \theta^{1:N}, \breve X^{1:N} - \E \breve X^{1:N} \rangle^2 + 2\xi\,\langle \theta^{1:N}, X^{1:N} - \E X^{1:N} - \breve X^{1:N} + \E \breve X^{1:N}\rangle^2\bigr)\Bigr) \\
        &\qquad\le \frac{1}{\xi}\, \Bigl(\KL(\mu_{t,y^{1:N}} \mmid \pi_{t, y^{1:N}}) + \frac{1}{2} \log \E\exp\bigl(4\xi\,\langle \theta^{1:N}, \breve X^{1:N} - \E \breve X^{1:N} \rangle^2\bigr) \\
        &\qquad\qquad\qquad{}+\frac{1}{2} \log \E\exp\bigl(4\xi\,\langle \theta^{1:N}, X^{1:N} - \E X^{1:N} - \breve X^{1:N} + \E \breve X^{1:N}\rangle^2\bigr)\Bigr)\,.
    \end{align*}
    By~\cite[Proposition 2.6.1]{Ver18HighDimProb},
    \begin{align*}
        \norm{\langle \theta^{1:N}, \breve X^{1:N} - \E \breve X^{1:N}\rangle}_{\psi_2}^2
        &= \Bigl\lVert \sum_{i=1}^N \langle \theta^i, \breve X^i - \E \breve X^i\rangle\Bigr\rVert_{\psi_2}^2
        \lesssim \sum_{i=1}^N {\norm{\langle \theta^i, \breve X^i - \E \breve X^i\rangle}_{\psi_2}^2}
        \lesssim \frac{1}{\alpha_t} \sum_{i=1}^N {\norm{\theta^i}^2\,}
        = \frac{1}{\alpha_t}\,,
    \end{align*}
    using the fact that $\breve \pi_{t,y^i}$ is $\alpha_t$-strongly log-concave, hence we have sub-Gaussian concentration of Lipschitz functions.
    Also,
    \begin{align*}
        \norm{\langle \theta^{1:N}, X^{1:N} - \E X^{1:N} - \breve X^{1:N} + \E \breve X^{1:N}\rangle}_{\psi_2}^2
        &= \Bigl\lVert \sum_{i=1}^N \langle \theta^i, X^i - \E X^i - \breve X^i + \E \breve X^i\rangle\Bigr\rVert_{\psi_2}^2 \\
        &\lesssim \sum_{i=1}^N {\norm{\langle \theta^i, X^i - \E X^i - \breve X^i + \E \breve X^i\rangle}_{\psi_2}^2} \\
        &\lesssim \frac{B^2}{\alpha_t^2 \sigma^4} \sum_{i=1}^N {\norm{\theta^i}^2}
        = \frac{B^2}{\alpha_t^2 \sigma^4}\,,
    \end{align*}
    where we applied Lemma~\ref{lem:winf} and Hoeffding's lemma.

    By~\cite[Proposition 2.7.1, Lemma 2.7.6, and Exercise 2.7.10]{Ver18HighDimProb}, there is a universal constant $c > 0$ such that if $\norm Z_{\psi_2} \le K$ and $Z$ is centered, then
    \begin{align*}
        \log \E \exp(\xi Z^2) - \xi \E[Z^2]
        &\lesssim \xi^2\,\norm Z_{\psi_2}^4\,, \qquad\text{for all}~\abs\xi < \frac{c}{\norm Z_{\psi_2}^2}\,.
    \end{align*}
    Applying this fact, we see that for $\xi \lesssim \alpha_t \wedge (\alpha_t^2 \sigma^4/B^2)$,
    \begin{align*}
        \langle \theta^{1:N}, \cov_{\mu_{t,y^{1:N}}} \theta^{1:N}\rangle
        &\lesssim \frac{1}{\xi} \KL(\mu_{t,y^{1:N}} \mmid \pi_{t, y^{1:N}}) + \frac{1}{\alpha_t} + \frac{\xi}{\alpha_t^2} + \frac{B^2}{\alpha_t^2 \sigma^4} + \frac{\xi B^4}{\alpha_t^4 \sigma^8}\,.
    \end{align*}
    Choosing $\xi$ to be its maximum allowable value, keeping in mind that we are in the regime of $t$ for which $\alpha_t \lesssim B^2/\sigma^4$, it yields
    \begin{align*}
        \norm{\cov_{\mu_{t,y^{1:N}}}}_{\op}
        &\lesssim \frac{B^2}{\alpha_t^2\sigma^4} \KL(\mu_{t,y^{1:N}} \mmid \pi_{t, y^{1:N}}) + \frac{1}{\alpha_t} + \frac{B^2}{\alpha_t^2\sigma^4}
        \lesssim \frac{\beta B^2 \dprox}{\alpha_t^3 \sigma^6} + \frac{\beta B^4}{\alpha_t^4 \sigma^{10}} + \frac{1}{\alpha_t} + \frac{B^2}{\alpha_t^2 \sigma^4}\,,
    \end{align*}
    where we plugged in the propagation of chaos bounds from Theorems~\ref{thm:poc} and~\ref{thm:poc-ii}.
    On the other hand,
    \begin{align*}
        \Bigl(\frac{1}{\sqrt{\alpha_t}} + \frac{CB}{\alpha_t\sigma^2}\Bigr)^2
        &\ge \frac{C^2 B^2}{\alpha_t^2\sigma^4}
        \gtrsim \frac{C^2 B^2}{\alpha_t^2 \sigma^4} + \frac{C^2}{\alpha_t}\,,
    \end{align*}
    where the last inequality follows from the regime of $t$.
    Hence, if we choose $C$ large enough, it dominates the last two terms above and yields the final bound.
\end{proof}

We can now prove our main theorems.
\medskip{}

\begin{proof}[Proof of Theorems~\ref{thm:main-generic} and~\ref{thm:main-specific}]
    First, we must ensure that $2\lambda > \sigma^2$.
    Consider the rescaling map $x \mapsto \eta x$, which produces a new measure $\mu_\eta^{1:N} \deq \eta_\# \mu^{1:N}$, given by
    \begin{align*}
        \mu_\eta^{1:N}(x^{1:N}) \propto \exp\Bigl(-\frac{2N}{\sigma^2}\, \mc F_0^\eta(\rho_{x^{1:N}}) - \frac{\lambda}{\eta^2\sigma^2}\, \norm{x^{1:N}}^2 \Bigr)\,.
    \end{align*}
    Here, $\mc F_0^\eta(\nu)  \deq  \mc F_0((\eta^{-1})_{\#} \nu)$, and we choose $\eta = \sqrt\lambda/\sigma$. The rescaled measure satisfies Assumptions~\ref{ass:cvxty},~\ref{ass:smoothness}, and~\ref{ass:bddgrad}, but with parameters $\beta \gets \beta \sigma^2/\lambda$, $\lambda \gets \sigma^2$, $B \gets B\sigma/\sqrt{\lambda}$.
    From Theorem~\ref{thm:heat_flow}, we take $a=1$ and
    \begin{align*}
        C_1 &\lesssim \sqrt{\frac{\beta \dprox}{\lambda}} + \frac{B}{\sqrt\lambda\sigma}\,, & k_1 &= \frac{3}{2}\,, \\[0.25em]
        C_2 &\lesssim \frac{\beta \dprox}{\lambda} + \frac{B^2}{\lambda\sigma^2}\,, & k_2 &= 2\,, \\[0.25em]
        C_3 &\lesssim \frac{\beta B^2 \dprox}{\lambda^2 \sigma^2}\,, & k_3 &= 3\,, \\[0.25em]
        C_4 &= \frac{\beta B^4}{\lambda^3 \sigma^{4}}\,, & k_4 &= 4 \,.
    \end{align*}
    Thus, we have an $L_\eta$-Lipschitz transport map to $\mu_\eta^{1:N}$ with $L_\eta$ given by
    \begin{align*}
        L_\eta
        &\lesssim \exp\Bigl(\mc O\Bigl(\sqrt{\frac{\beta \dprox}{\lambda}} + \frac{B}{\sqrt\lambda\sigma} + \frac{\beta \dprox}{\lambda} + \frac{B^2}{\lambda\sigma^2} + \frac{\beta B^2 \dprox}{\lambda^2 \sigma^2} + \frac{\beta B^4}{\lambda^3 \sigma^{4}}\Bigr)\Bigr) \\
        &\lesssim \exp\Bigl(\mc{O}\Bigl(\frac{\beta \dprox}{\lambda} +\frac{B^2}{\lambda\sigma^2} + \frac{\beta B^2 \dprox}{\lambda^2\sigma^2} + \frac{\beta B^4}{\lambda^3\sigma^{4}} \Bigr)\Bigr)\,.
    \end{align*}
    Finally, composing with the scaling map $x\mapsto \eta^{-1} x$ yields the estimate for $L$.
    Note that when $\dprox = 1$, the third term in the exponential can be controlled in terms of the first and last terms by the Cauchy--Schwarz inequality, so we can omit it to simplify the final bound.
\end{proof}

\section*{Acknowledgments}

We thank Zhenjie Ren, Taiji Suzuki, and Songbo Wang for helpful conversations. MSZ was supported by NSERC through the CGS-D program.

\printbibliography

\end{document}